\newtheorem{thm}{Theorem}
\newtheorem{lem}[thm]{Lemma}
\newtheorem{cor}[thm]{Corollary}
\newtheorem{prop}[thm]{Proposition}
\theoremstyle{definition}
\theoremstyle{definition}
\newcommand{\x}[2]{X_{\pm}^{#1, #2}}
\newcommand{\mr}[1]{\mathbb{R}^{#1}}
\newcommand{\apm}{A_{\mu, \pm}}
\newcommand{\ppm}{\phi_{\pm}}
\newcommand{\lt}{L_{t}^{\infty}}
\newcommand{\hx}[1]{H_{x}^{#1}}
\title[Unconditional Uniqueness for Chern-Simons-Higgs]{A Remark on Unconditional Uniqueness in the Chern-Simons-Higgs Model}
\author{Sigmund Selberg}
\address{Department of Mathematical Sciences\\
Norwegian University of Science and Technology\\
Alfred Getz' vei 1\\
N-7491 Trondheim\\ Norway}
\email{sigmund.selberg@math.ntnu.no}
\author{Daniel Oliveira da Silva}
\address{Department of Mathematical Sciences\\
Norwegian University of Science and Technology\\
Alfred Getz' vei 1\\
N-7491 Trondheim\\ Norway}
\email{daniel.dasilva@math.ntnu.no}
\keywords{Chern-Simons-Higgs; well-posedness; unconditional uniqueness; Lorenz gauge; null forms}
\subjclass[2000]{35Q40; 35L70; 81V10}
\thanks{Both authors were supported by the Research Council of Norway, grant no.~213474/F20.}
\begin{document}

\begin{abstract}
The solution of the Chern-Simons-Higgs model in Lorenz gauge with data for the potential in $H^{s-1/2}$ and for the Higgs field in $H^s \times H^{s-1}$ is shown to be unique in the natural space $C([0,T];H^{s-1/2} \times H^s \times H^{s-1})$ for $s \ge 1$, where $s=1$ corresponds to finite energy. Huh and Oh recently proved local well-posedness for $s > 3/4$, but uniqueness was obtained only in a proper subspace $Y^s$ of Bourgain type. We prove that any solution in $C([0,T];H^{1/2} \times H^1 \times L^2)$ must in fact belong to the space $Y^{3/4+\epsilon}$, hence it is the unique solution obtained by Huh and Oh.
\end{abstract}

\maketitle


\section{Introduction}
The (2+1)-dimensional Chern-Simons-Higgs model, first studied by Hong, Kim, and Pac \cite{HKP1990} and Jackiw and Weinberg \cite{JW1990}, consists of a Higgs field $\phi : \mathbb{R}^{1+2} \rightarrow \mathbb{C}$ and a gauge field $A : \mr{1+2} \longrightarrow \mr{1+2}$ which satisfy the equations
\begin{equation}\label{CSH}
\begin{aligned}
& D_\mu D^\mu \phi = - \phi V'\left( |\phi |^2\right) \\
& F_{\mu \nu} = \frac{2}{\kappa} \epsilon_{\mu \nu \rho} \textrm{Im}\left( \overline{\phi} D^\rho \phi \right).
\end{aligned}
\end{equation}
Here, $D_\mu = \partial_\mu - i A_\mu$ denotes the covariant derivative associated to $A$, $V$ is a given potential which will be assumed to be a polynomial, $\kappa$ is a constant, $F_{\mu \nu} = \partial_\mu A_\nu - \partial_\nu A_\mu$ is the curvature, and $\epsilon_{\mu \nu \rho}$ is the skew-symmetric tensor with $\epsilon_{0 1 2} = 1$.  The Einstein summation convention is in effect, and we raise and lower indices with the Minkowski metric $g_{\mu \nu} = \textrm{diag}(1,-1,-1)$.

We would like to focus our discussion on the subject of local well-posedness of the Cauchy problem for these equations, with initial condition
\[
  A_\mu(0) = a_\mu \quad (\mu = 0,1,2), \qquad (\phi,\partial_t\phi)(0) = (f,g).
\]
Currently, the best result on the matter is that of Huh and Oh \cite{HO2012}, in which local well-posedness for $H^{\frac{3}{4}+\epsilon}$ initial data sets was shown, improving earlier results in \cite{H2007, B2009, H2011, ST2013}.

Here $H^s = (1-\Delta)^{-s/2} L^2(\mr{2})$, and $(a_\mu,f,g)$ is said to be an \emph{$H^s$ initial data set} if $a_\mu \in H^{s-1/2}$, $f \in H^s$, $g \in H^{s-1}$, and the constraint equation \[\partial_1 a_2 - \partial_2 a_1 = \frac{2}{\kappa} \textrm{Im}\left( \overline{f} (g-ia_0 f) \right)\] is satisfied.

In the context of well-posedness for the Chern-Simons-Higgs model, two values of $s$ are of particular importance: $s=1/2$ is the scale invariant regularity, and $s=1$ is the energy regularity. Well-posedness for $s=1$ was first proved in \cite{ST2013}, where it was also shown that solutions with finite-energy data extend globally in time. The global result for more regular data ($s \ge 2$) was proved earlier, in \cite{CC2002}.

The proof of Huh and Oh involves a contraction argument in a proper subspace $Y^s$ of the natural evolution space $C([0,T] ; H^{s-1/2} \times H^s \times H^{s-1})$. Thus, the argument of Huh and Oh yields only \emph{conditional well-posedness}, in the sense that the uniqueness is known only in the contraction space $Y^s$.  The goal of the present work is to improve this to an unconditional result for $H^s$ initial data sets with $s \ge 1$.  This is the content of the following theorem.

\begin{thm}\label{mainthm}
Let $s \geq 1$.  Consider \eqref{CSH} complemented by the Lorenz gauge condition $\partial^\alpha A_\alpha = 0$. Given an $H^s$ data set $(a_\mu,f,g)$ and a time interval $I$ containing $t=0$, there is at most one solution $(A_\mu,\phi,\partial_t\phi)$ in $C(I; H^{s-1/2} \times H^s \times H^{s-1})$ with the prescribed initial data.  Thus, \eqref{CSH} is unconditionally locally well posed.
\end{thm}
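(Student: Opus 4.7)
The plan is to reduce to the critical energy case $s = 1$ and then to show that every solution in the energy class $C(I; H^{1/2} \times H^1 \times L^2)$ must in fact belong to the Huh-Oh contraction space $Y^{3/4+\epsilon}$, so that the uniqueness assertion follows immediately from the conditional uniqueness proved in \cite{HO2012}. The reduction to $s = 1$ is automatic because, by the Sobolev embedding on the regularity indices $(s-1/2, s, s-1)$, any solution in $C(I; H^{s-1/2} \times H^s \times H^{s-1})$ with $s \geq 1$ is also an energy-class solution. So I fix a single energy-class solution $(A_\mu, \phi, \partial_t \phi)$ and set out to show it lies in $Y^{3/4+\epsilon}$.

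Next, I would rewrite \eqref{CSH} as a system of nonlinear wave equations by invoking the Lorenz gauge $\partial^\alpha A_\alpha = 0$. Taking $\partial^\nu$ of the curvature equation in \eqref{CSH} and using the gauge condition yields $\Box A_\mu = \tfrac{2}{\kappa}\, \epsilon_{\mu\nu\rho}\, \partial^\nu\, \mathrm{Im}\!\left(\overline{\phi}\, D^\rho \phi\right)$, while expanding $D_\mu D^\mu \phi$ and dropping the $\partial^\alpha A_\alpha$ term gives $\Box \phi = 2i A^\alpha \partial_\alpha \phi + |A|^2 \phi - \phi V'(|\phi|^2)$. Both wave equations carry null structure: the $A$-equation through the antisymmetry of $\epsilon_{\mu\nu\rho}$ contracted with $\partial^\nu$, and the $\phi$-equation because in the Lorenz gauge the leading term $A^\alpha \partial_\alpha \phi$ is of null-form type. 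These are exactly the structures exploited in \cite{HO2012}.

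To extract Bourgain-space regularity, perform the $\pm$ half-wave decomposition of the fields, producing $\apm$ and $\ppm$, and split each component as $\apm = \aph + \api$, $\ppm = \pph + \ppi$, where the subscript $h$ denotes the free homogeneous part with the prescribed Cauchy data and $i$ denotes the Duhamel contribution from the nonlinearity. On the bounded interval $I$, a suitable time-cutoff puts the homogeneous parts $\aph$ and $\pph$ into $\x{s-1/2}{b}$ and $\x{s}{b}$ for every $b$ once the data lie in the corresponding $H^s$-space. The entire task therefore reduces to showing that the Duhamel pieces $\api$ and $\ppi$ lie in $\x{1/4+\epsilon}{1/2+\delta}$ and $\x{3/4+\epsilon}{1/2+\delta}$, respectively, for some $\delta > 0$.

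The main obstacle, and the technical heart of the argument, is establishing bilinear null-form estimates in which one factor is controlled only by the $\lt \hx{s}$ norm of the given solution rather than by the stronger Bourgain norm. Schematically, one needs inequalities of the type $\|Q(u,v)\|_{\x{s-1}{-1/2+\delta}} \lesssim \|u\|_{\lt \hx{s_1}}\, \|v\|_{\x{s_2}{1/2+\delta}}$ for the various null forms appearing in the right-hand sides above, with indices matched to the a priori energy regularity. Such mixed estimates should follow from the Klainerman--Machedon null-form bounds on $(2+1)$-dimensional waves, combined with Strichartz and Sobolev embeddings in two space dimensions and the transfer principle relating $\x{s}{b}$ to plain $L^\infty_t H^s_x$ for $b > 1/2$. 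Once they are in hand, a Duhamel bootstrap on a sufficiently short subinterval places $\api$ and $\ppi$ into the required $\x{\cdot}{1/2+\delta}$ spaces, hence $(A_\mu, \phi, \partial_t \phi) \in Y^{3/4+\epsilon}$ there; a standard continuation-in-time argument covers the whole of $I$, and the uniqueness conclusion follows from \cite{HO2012}.
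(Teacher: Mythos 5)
Your overall strategy --- reduce to $s=1$, show that any energy-class solution must already lie in the Huh--Oh space $Y^{3/4+\epsilon}$, and then invoke the conditional uniqueness of \cite{HO2012} --- is exactly the one the paper follows, and your reduction to $s=1$ is fine. However, the mechanism you propose for the regularity upgrade has a genuine gap, in two places. First, your key schematic estimate $\|Q(u,v)\|_{\x{s-1}{-1/2+\delta}} \lesssim \|u\|_{\lt \hx{s_1}}\, \|v\|_{\x{s_2}{1/2+\delta}}$, with one factor controlled only in $\lt \hx{s_1}$, is not available at these regularities: the null form is exploited by trading the angle $\theta(\pm_1\eta,\pm_2\zeta)$ against powers of the modulation weights of the output \emph{and of both inputs} (Lemma \ref{null}), and in the region where the modulation of the factor $u$ dominates there is no gain at all if $u$ carries no Bourgain-norm control; for that contribution the bound degenerates to a product of full gradients measured only in $L^\infty_t L^2_x$, which fails the product law of Lemma \ref{prodsob} by a positive number of derivatives. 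Second, the ``Duhamel bootstrap on a short subinterval'' is circular for a solution that is merely \emph{given} in $C(I;H^{1/2}\times H^1\times L^2)$: a bootstrap or continuity argument presupposes that the Bourgain norm of the Duhamel piece is finite (and varies continuously with the interval), which is precisely what has to be proved.

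The missing idea is the Zhou-type iteration through Corollary \ref{EnergyCorollary}: if a $C(I;H^s)$ solution of $(i\partial_t\pm\langle D\rangle)u=F$ has $F\in \x{s}{b-1}$, then $u\in\x{s}{b}$ --- and crucially $F$ is estimated using only norms of the solution already known to be finite, never the norm being established, so no bootstrap is needed. Starting from the free information $\ppm\in\x{1}{0}$, $\apm\in\x{1/2}{0}$ (H\"older in time), one first bounds the nonlinearities by crude Sobolev products in $L^2_t H^\cdot_x=\x{\cdot}{0}$, obtaining $\ppm\in\x{1/2}{1}$ and $\apm\in\x{-\epsilon}{1}$: a large loss in spatial regularity, but full temporal weight $b=1$. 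Interpolating with the $b=0$ information yields bounds at $b=\tfrac12$ with intermediate spatial regularity, and only now do all factors carry enough modulation weight for Lemma \ref{null} to be applied to every term. Two further passes through the nonlinearity, each followed by interpolation, raise the regularity to $\ppm\in\x{3/4+\epsilon}{3/4-\epsilon}$ and $\apm\in\x{1/4+\epsilon}{3/4-\epsilon}$, which places the solution in the Huh--Oh uniqueness class. Without this iteration your single-step argument cannot close.
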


In particular, this result includes the energy regularity $s=1$, and this was the main motivation for the present work. Our proof can very likely be pushed further to prove the same result even for some range of $s$ below energy (by using space-time product estimates from \cite{DFS2010}), but we do not consider this problem here.

The proof of Theorem \ref{mainthm} is based on an idea of Zhou \cite{Zhou2000} (see also \cite{ST2013b}).  We prove that any solution residing in the energy space $C(I; H^{1/2} \times H^1 \times L^2)$ must belong to some rougher spaces of $X^{s,b}$ type. Iterating the argument a finite number of times we are then able to show that the solution belongs to the low-regularity space $Y^{\frac34 + \epsilon}$ in which Huh and Oh proved uniqueness. In the next section, we will begin the proof of Theorem \ref{mainthm} by introducing some simplifications, as well as some important estimates that will be needed.  The details of the proof are given in sections 3--5.


\section{Preliminaries}\label{prel}

As an initial simplification, we note that we need only prove Theorem \ref{mainthm} for $s = 1$. Next, we introduce the half-wave decompositions
\begin{equation}\begin{aligned}\label{halfwave}
A_{\mu, \pm} & = \frac{1}{2}\left( A_\mu \pm \frac{1}{i}\langle \nabla \rangle^{-1} \partial_t A_\mu \right) \\
\phi_{\pm} & = \frac{1}{2}\left( \phi \pm \frac{1}{i}\langle \nabla \rangle^{-1} \partial_t \phi \right).
\end{aligned}\end{equation}
Equivalently,
\begin{equation}\begin{alignedat}{2}\label{halfwavereversed}
  A_\mu &= A_{\mu,+} + A_{\mu,-},& \qquad \partial_t A_\mu &= i \langle \nabla \rangle \left( A_{\mu,+} - A_{\mu,-} \right),
  \\
  \phi &= \phi_+ + \phi_-,& \qquad \partial_t \phi &= i \langle \nabla \rangle \left( \phi_+ - \phi_- \right).
\end{alignedat}
\end{equation}
This decomposition, when combined with the Lorenz gauge condition $\partial^\mu A_{\mu} = 0$, allows us to rewrite \eqref{CSH} in the form
\begin{equation}\label{CSH2}
\begin{aligned}
\left(i \partial_t \pm \langle \nabla \rangle\right)A_{\mu, \pm} & = \pm\frac{1}{2}\langle \nabla \rangle^{-1}M_{\mu}(A,\phi) \\
\left(i \partial_t \pm \langle \nabla \rangle\right)\phi_{\pm} & = \pm\frac{1}{2}\langle \nabla \rangle^{-1}N(A,\phi),
\end{aligned}
\end{equation}
where $M_{\mu}$ and $N$ are given by
\begin{displaymath}
\begin{aligned}
M_{\mu}(A,\phi) & = -\epsilon_{\mu\nu\rho} \textrm{Im}\ Q^{\nu\rho} \left(\overline{\phi}, \phi \right) + 2 \epsilon_{\mu\nu\rho} \partial^{\nu} \left( A^\rho | \phi |^2 \right) + A_\mu, \\
N(A,\phi) & = 2iA_\mu \partial^\mu \phi + A_\mu A^\mu \phi - \phi V'(|Ê\phi |^2) + \phi,
\end{aligned}
\end{displaymath}
and $Q_{\mu \nu}(u,v) = \partial_\mu u \partial_\nu v - \partial_\nu u \partial_\mu v$ are Klainerman's null forms.  In $M_\mu$ and $N$ it is understood that we use the substitutions in \eqref{halfwavereversed} to express everything in terms of $A_{\mu,\pm}$ and $\phi_{\pm}$.  We remark that the initial values of $\partial_t A_\mu$ are determined by those for $(A_\mu,\phi,\partial_t\phi)$ via the second equation in \eqref{CSH} and the Lorenz gauge condition. Thus, we consider \eqref{CSH2} for initial data $A_{\mu, \pm}(0) = a_{\mu, \pm} \in H^{1/2}$ and $\phi_{\pm}(0) = f_{\pm} \in H^{1}$.

The appearance of the term $Q_{\mu \nu}$ is noteworthy in that it is an example of a \emph{null form}.  These bilinear structures are known to have many useful properties (see, for example, \cite{SK2002}).  As such, we would like to introduce additional null forms into the nonlinearities, so that these useful properties can be exploited.  Following \cite{ST2013}, we define
\begin{equation}\begin{aligned}\label{defs}
& B_1 = \sum_{\pm_1, \pm_2} \big(  A_{0, \pm_1} \langle \nabla \rangle (\pm_2 \phi_{\pm_2}) - R_j (\pm_1 A_{0, \pm_1}) \partial^j \phi_{\pm_2}\big) \\
& B_2 = R_2 \psi \partial_1 \phi - R_1 \psi \partial_2 \phi \\
& B_3 = \langle \nabla \rangle^{-2}A_{\mu} \partial^{\mu} \phi,
\end{aligned}\end{equation}
where
\[
  \psi = R_1 A_2 - R_2 A_1, \qquad R_j = (1 - \Delta)^{-1/2} \partial_j.
\]
It has been shown that $B_1$ and $B_2$ are null forms \cite{KM1994,ST2010}.  With these definitions, and using the Lorenz gauge condition, we can rewrite $N$ as
\[
N(A, \phi) = 2i ( B_1 - B_2 - B_3 ) + A_{\mu} A^{\mu} \phi - \phi V'\left( |\phi|^2 \right) + \phi.
\]

Next, we will introduce the relevant function spaces.  The most important spaces we will work with are the $X_\pm^{s,b}$ spaces, defined as the closure of the Schwartz space $\mathcal{S}$ under the norm
\[
\| u \|_{\x{s}{b}} = \| \langle \xi \rangle^{s} \langle - \tau \pm |\xi| \rangle^{b} \widetilde{u}(\tau, \xi) \|_{L^2_{\tau, \xi}},
\]
where $\widetilde u(\tau,\xi)$ denotes the space-time Fourier transform of $u(t,x)$ and
$$
  \langle \xi \rangle = (1 + |\xi|^2)^{1/2}.
$$
Observe that
\[
  X_\pm^{s,0} = L_t^2H_x^s
\]
for all $s \in \mathbb R$.

Due to the local nature of our result, we work with local versions of the $X^{s,b}$ spaces.  If $I$ is an interval, we define the restriction space $\x{s}{b}(I)$ by
\[
\| u \|_{\x{s}{b}(I)} = \inf \left\{ \| v \|_{\x{s}{b}} \Big|\ v = u\ \text{on}\ I \times \mr{2} \right\}.
\]
We recall the fact (see, e.g., \cite[section 2.6]{T2006}) that for $b > 1/2$, $\x{s}{b}(I)$ embeds into $C(I;H^s)$. We will work mostly with the restriction spaces, so we usually omit $I$ from the notation for these spaces. 

As a final step to setting up the proof of Theorem \ref{mainthm}, we recall the result of Huh and Oh.
\begin{thm}[\cite{HO2012}]
Suppose that $(a_\mu, f, g)$ is an $H^{3/4 + \epsilon}$ initial data set, where $0 < \epsilon \ll 1$.  Then there exists 
\[
T = T\left(\| a_\mu \|_{H^{1/4 + \epsilon}}, \| (f,g) \|_{H^{3/4 + \epsilon} \times H^{1/4 + \epsilon}}\right) > 0
\]
such that a solution $(A_\mu , \phi)$ to \eqref{CSH} under the Lorenz gauge condition $\partial^\mu A_\mu = 0$ and with the prescribed initial data exists on the time interval $I = (-T, T)$ and satisfies $A_\mu \in C(I; H^{1/4 + \epsilon})$ and $\phi \in C(I; H^{3/4 + \epsilon}) \cap C(I; H^{1/4 + \epsilon})$.
\end{thm}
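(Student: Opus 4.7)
The plan is to set up a contraction mapping argument for the reformulated system \eqref{CSH2} in a resolution space of Bourgain type. Taking $s = 3/4 + \epsilon$ and a small $\delta > 0$, I would define $Y^s$ by requiring $\apm \in \x{s-1/2}{1/2 + \delta}$ and $\ppm \in \x{s}{1/2 + \delta}$ on a time interval $I = (-T, T)$ via the corresponding restriction norms. Since $1/2 + \delta > 1/2$, the embedding $\x{s}{1/2 + \delta}(I) \hookrightarrow C(I; H^s)$ noted in the preliminaries yields the required continuity of the solution in time.

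The standard linear theory for the half-wave equation $(i \partial_t \pm \langle \nabla \rangle) u = F$ gives, on a short time interval,
\[
\| u \|_{\x{s}{1/2+\delta}} \lesssim \| u(0) \|_{H^s} + T^\theta \| F \|_{\x{s}{-1/2+\delta'}},
\]
with $\delta' < \delta$ and some $\theta > 0$ coming from the time localization. Combined with \eqref{CSH2}, this reduces the well-posedness problem to multilinear estimates of the schematic form
\[
\| \langle \nabla \rangle^{-1} M_\mu(A,\phi) \|_{\x{s-1/2}{-1/2+\delta'}} + \| \langle \nabla \rangle^{-1} N(A,\phi) \|_{\x{s}{-1/2+\delta'}} \lesssim P\bigl( \| A \|_{Y^s}, \| \phi \|_{Y^s} \bigr),
\]
with $P$ polynomial. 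The $T^\theta$ factor then permits a standard contraction on a ball in $Y^s$.

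The heart of the matter lies in the bilinear parts of $M_\mu$ and $N$. In $M_\mu$ the quadratic term is precisely the Klainerman null form $Q^{\nu\rho}(\overline{\phi}, \phi)$; after the decomposition $\phi = \phi_+ + \phi_-$ the null symbol produces the angular cancellation that permits a sharp $X_{\pm}^{s,b}$ product estimate well below the scale-invariant threshold $s = 1/2$. In $N$ the term $2i A_\mu \partial^\mu \phi$ is not a null form as it stands, but the Lorenz gauge $\partial^\mu A_\mu = 0$ allows it to be recast as $2i(B_1 - B_2 - B_3)$ in the notation of \eqref{defs}: $B_1$ and $B_2$ carry null structure, while $B_3$ is smoothed by $\langle \nabla \rangle^{-2}$ and is essentially trivial. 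The remaining cubic and higher-order terms $A_\mu A^\mu \phi$, $\partial^\nu(A^\rho |\phi|^2)$, and $\phi V'(|\phi|^2)$ are subcritical and succumb to Strichartz combined with standard $X_{\pm}^{s,b}$ algebra estimates.

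The principal obstacle I foresee is producing null form bilinear bounds of precisely the right shape: a bound on $Q(\phi_{\pm_1}, \phi_{\pm_2})$ into $\x{s-1/2}{-1/2+\delta'}$ together with companion bounds on $B_1, B_2$ into $\x{s}{-1/2+\delta'}$, while simultaneously permitting the gauge potential to carry a half-derivative less regularity than the Higgs field. Accommodating both $s - 1/2 = 1/4 + \epsilon$ for $A$ and $s = 3/4 + \epsilon$ for $\phi$ requires a careful dyadic decomposition in frequency and modulation, with the null symbol's behaviour at small angle exploited in the resonant interactions. Once these bilinear estimates are in place, persistence of regularity and continuity of the data-to-solution map follow by the usual Picard contraction.
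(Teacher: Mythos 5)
This statement is quoted from Huh and Oh \cite{HO2012}; the paper gives no proof of it, only a description of the method (a Picard iteration in a Bourgain-type subspace $Y^s$ of the natural evolution space, with the null structure extracted via $Q^{\nu\rho}(\overline{\phi},\phi)$ and the decomposition $2iA_\mu\partial^\mu\phi = 2i(B_1-B_2-B_3)$ under the Lorenz gauge). Your outline matches that description in strategy, so there is nothing to object to at the level of architecture.

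As a proof, however, your proposal has a genuine gap: the entire analytic content is deferred. At $s = \tfrac34+\epsilon$ one is only a quarter-derivative above the scale-invariant regularity $s=\tfrac12$, with the potential $A_\mu$ a further half-derivative rougher, and the theorem stands or falls on whether the bilinear null-form estimates for $Q(\overline{\phi}_{\pm_1},\phi_{\pm_2})$, $B_1$, and $B_2$ actually close at the stated exponents; you name this as ``the principal obstacle'' but do not establish it, and there is no soft argument that substitutes for the dyadic frequency/modulation/angular analysis. Two smaller points: (i) the paper records that Huh and Oh place the solutions in $\x{1/4+\epsilon}{3/4-2\epsilon}(I)$ and $\x{3/4+\epsilon}{3/4-2\epsilon}(I)$, i.e.\ with $b = \tfrac34-2\epsilon$ rather than your $b=\tfrac12+\delta$ --- near the critical regularity the modulation exponent is not a free parameter but is dictated by where the bilinear estimates hold, so you should not assume $b=\tfrac12+\delta$ works; (ii) your claim that the cubic terms ``succumb to Strichartz'' is plausible but again unverified, though these are indeed the easy terms (compare the treatment of $A_\mu A^\mu\phi$ and $\partial^\nu(A^\rho|\phi|^2)$ via Corollary \ref{prodsobcor} in Sections \ref{basic}--\ref{improved}, where there is ample room).
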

  We note that in the proof, Huh and Oh obtain solutions $\apm$ and $\ppm$ to \eqref{CSH2} which lie in $\x{1/4 + \epsilon}{3/4 - 2\epsilon}(I)$ and $\x{3/4 + \epsilon}{3/4 - 2\epsilon}(I)$, respectively, and that the solutions are unique in these spaces.  From this, we see that Theorem \ref{mainthm} will follow from the following proposition.
\begin{prop}\label{mainthm2}
Let $\ppm \in C(I; H^1)$ and $\apm \in C(I; H^{1/2})$ be solutions to \eqref{CSH2}.  Then $\apm \in \x{1/4 + \epsilon}{3/4 - \epsilon}(I)$ and $\ppm \in \x{3/4 + \epsilon}{3/4 - \epsilon}(I)$ for $0 < \epsilon \ll 1$.
\end{prop}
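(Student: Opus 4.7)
The proof plan follows the persistence-of-regularity bootstrap of Zhou \cite{Zhou2000}. The hypothesis that $\apm \in L^\infty_t H^{1/2}$ and $\ppm \in L^\infty_t H^1$ on the bounded interval $I$ embeds immediately into $\x{1/2}{0}$ and $\x{1}{0}$, respectively. The aim is to upgrade this base-level $b=0$ information into the Huh--Oh regularity $\x{1/4+\epsilon}{3/4-\epsilon}$ and $\x{3/4+\epsilon}{3/4-\epsilon}$ by iteratively feeding the information back into the right-hand side of \eqref{CSH2} and invoking the standard linear estimate
\[
\| \chi_I u \|_{\x{s}{b}} \lesssim \| u(0) \|_{H^s} + \| F \|_{\x{s}{b-1}}, \qquad 1/2 < b < 1,
\]
for solutions of $(i\partial_t \pm \langle\nabla\rangle)u = F$. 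Once the solutions are shown to live in the Huh--Oh contraction space, Theorem \ref{mainthm} is immediate from the uniqueness clause in that theorem.

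Concretely, I would set up a finite sequence $0 = b_0 < b_1 < \cdots < b_K = 3/4 - \epsilon$ together with spatial indices $(s_k^A, s_k^\phi)$ staying above $(1/4 + \epsilon, 3/4 + \epsilon)$, and prove the inductive statement: if $\apm \in \x{s_k^A}{b_k}$ and $\ppm \in \x{s_k^\phi}{b_k}$, then every cubic and quadratic term appearing in $M_\mu$ and $N$ can be estimated in $\x{s_{k+1}^A-1}{b_{k+1}-1}$ and $\x{s_{k+1}^\phi-1}{b_{k+1}-1}$, so that after applying $\langle \nabla \rangle^{-1}$ the linear estimate upgrades the solutions to level $k+1$. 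The nonlinear estimates themselves decompose along the structure already visible in \eqref{CSH2}: the null forms $Q^{\nu\rho}(\overline{\phi},\phi)$, $B_1$, $B_2$ are controlled using the null-form bilinear estimates of \cite{KM1994, ST2013, HO2012}, which provide a genuine gain over the generic product; the remaining quadratic term $B_3 = \langle\nabla\rangle^{-2} A_\mu \partial^\mu \phi$ and the cubic expressions $A^\rho |\phi|^2$, $A_\mu A^\mu \phi$, and $\phi V'(|\phi|^2)$ are treated by Leibniz/Hölder/Sobolev manipulations in two space dimensions, using repeatedly that $H^1(\mr{2}) \hookrightarrow L^p$ for every $p < \infty$ to absorb the polynomial potential. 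Since each step of the induction buys a fixed positive amount $b_{k+1} - b_k$, finitely many iterations reach the Huh--Oh window.

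I expect the true difficulty to lie in the very first iteration, where both inputs are only at base regularity and no Bourgain gain is yet available. At that stage a bilinear expression such as $Q^{\nu\rho}(\overline{\phi},\phi)$ involves the product of two spatial derivatives of $H^1$ functions in 2D, which would fail to land in any $\x{s}{b-1}$ with $b > 1/2$ were it not for the null cancellation in the high-high frequency interaction; the same is true of the $B_1$ estimate, which is where the derivative on $\phi$ has to be shared between factors. Handling this first step therefore requires the sharp null-form bilinear estimates, possibly supplemented by the space-time product estimates of \cite{DFS2010}, to extract a strictly positive $b$ from purely energy-level data. Once a small gain $b_1 > 0$ is secured, all subsequent iterations reduce to mechanical applications of the same product and null-form estimates at successively better input regularity, and the induction terminates at the desired endpoint.
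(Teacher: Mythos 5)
Your overall blueprint---Zhou-type persistence of regularity, finitely many Duhamel iterations, null-form estimates for the bilinear terms, Sobolev products for the cubic and polynomial terms, then concluding via the Huh--Oh uniqueness---is the right shape and matches the paper. However, the mechanism you propose for the iteration, and in particular your diagnosis of where the difficulty lies, does not match how the argument actually has to run, and the step you identify as the crux would fail as you describe it. You propose to keep the spatial indices above the targets $(1/4+\epsilon,\,3/4+\epsilon)$ throughout and ratchet $b$ up from $0$ in small increments, extracting ``a strictly positive $b$ from purely energy-level data'' at the first step by means of sharp null-form bilinear estimates. The problem is that the null-form gain (Lemma \ref{null}) converts the angle $\theta(\pm_1\eta,\pm_2\zeta)$ into powers of the modulation weights $\langle -\lambda \pm_1 |\eta|\rangle$ and $\langle -\mu \pm_2|\zeta|\rangle$ attached to the \emph{inputs}; to cash this in you need the inputs to already carry some positive $X^{s,b}$ regularity in $b$. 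With only the hypothesis $\ppm\in \x{1}{0}$, $\apm\in\x{1/2}{0}$ there is nothing to absorb those weights, so no null cancellation can be exploited at the first step in your framework. Moreover the Duhamel estimate (Lemma \ref{energy}) only produces outputs with $b>1/2$, so a ladder $0=b_0<b_1<\cdots$ of small increments in $b$ is not what the linear theory delivers.

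The paper's resolution, which your proposal omits, is a trade of spatial regularity for temporal regularity followed by interpolation: the first step is completely null-form-free and consists of estimating $M_\mu$ and $N$ crudely in $\lt\hx{-1-\epsilon}$ and $\lt\hx{-1/2}$ (pure Sobolev products of the $L^\infty_t$-in-time data), which places the nonlinearities in $X^{\sigma,0}_\pm$ for \emph{lowered} $\sigma$ and hence yields $\ppm\in\x{1/2}{1}$ and $\apm\in\x{-\epsilon}{1}$ --- full $b=1$ at reduced $s$. Interpolating these against the trivial $b=0$, full-$s$ inclusions produces a whole family of spaces, crucially including $b=1/2$ norms such as $\x{3/4}{1/2}$ for $\ppm$ and $\x{1/4-\epsilon}{1/2}$ for $\apm$. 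Only \emph{then}, in the second and third rounds, is the null structure used: the $\Lambda^{1/2}_{\pm_i}$ factors produced by Lemma \ref{null} are absorbed precisely by these $b=1/2$ norms of the inputs, and the iteration advances in the \emph{spatial} index of the available family (first to $\x{3/4-\epsilon}{3/4}$ and $\x{1/4}{3/4}$, then, after one more interpolation, to the targets $\x{3/4+\epsilon}{3/4-\epsilon}$ and $\x{1/4+\epsilon}{3/4-\epsilon}$). So the first step is the easiest, not the hardest, and the essential idea missing from your proposal is the $s$-for-$b$ sacrifice plus two-endpoint interpolation that makes the null forms usable at all.
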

  Subsequent sections of this paper are devoted to proving Proposition \ref{mainthm2}.

Finally, we introduce the necessary estimates which are used repeatedly throughout the proof.  First we recall some well-known facts about solvability in $X^{s, b}_\pm(I)$ of the Cauchy problem
\begin{equation}\label{LinearProblem}
  \left(i\partial_t \pm \langle D \rangle \right) u = F \quad \text{in $I \times \mr{2}$}, \qquad u(0,x) = f(x),
\end{equation}
for given $f(x)$, $F(t,x)$, and a time interval $I$ containing $t=0$. A proof of the following result can be found, for example, in \cite[section 2.6]{T2006}.

\begin{lem}\label{energy}
Let $s \in \mathbb{R}$, $1/2 < b \leq 1$, $0 < T \leq 1$, and $I=(-T,T)$.  Assume $F \in X_\pm^{s, b - 1}(I)$ and $f \in H^s$. Then \eqref{LinearProblem} has a solution $u \in X^{s, b}_\pm(I)$ satisfying
\begin{equation}\label{energyest}
\| u \|_{X^{s, b}_\pm(I)} \lesssim \| f \|_{H^s} + \| F \|_{X_\pm^{s, b-1}(I)}.
\end{equation}
\end{lem}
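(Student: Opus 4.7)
The statement is the standard linear $X^{s,b}$ energy estimate for the half-wave equation, and the plan is to follow the textbook approach (as in \cite[Section 2.6]{T2006}): Duhamel's formula on all of $\mathbb{R}$ combined with a smooth time cut-off and a high/low decomposition in the modulation variable $\tau \pm \langle\xi\rangle$.

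First I would reduce to a global problem. By definition of the restriction norm there exists an extension $\tilde F \in X^{s,b-1}_\pm$ of $F$ with $\|\tilde F\|_{X^{s,b-1}_\pm} \le 2\|F\|_{X^{s,b-1}_\pm(I)}$. Fix $\chi \in C^\infty_c(\mathbb R)$ with $\chi \equiv 1$ on $[-1,1]$ and set
\[
u(t) := \chi(t)\, e^{\mp it\langle D\rangle} f - i\,\chi(t)\int_0^t e^{\mp i(t-s)\langle D\rangle}\tilde F(s)\,ds.
\]
Since $T \le 1$, this $u$ agrees on $I$ with a solution of \eqref{LinearProblem}, so it suffices to prove the global bound $\|u\|_{X^{s,b}_\pm} \lesssim \|f\|_{H^s} + \|\tilde F\|_{X^{s,b-1}_\pm}$.

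For the free piece $\chi(t)e^{\mp it\langle D\rangle}f$, the space-time Fourier transform equals $\hat\chi(\tau \pm \langle\xi\rangle)\,\hat f(\xi)$ up to constants. Using that $\hat\chi$ is Schwartz, and that $\langle -\tau \pm |\xi|\rangle$ and $\langle -\tau\pm\langle\xi\rangle\rangle$ are comparable (they differ by $O(1)$), a direct computation gives the bound $\|\chi e^{\mp it\langle D\rangle}f\|_{X^{s,b}_\pm} \lesssim \|f\|_{H^s}$ valid for any $b\in\mathbb R$.

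For the Duhamel piece I would decompose $\tilde F = \tilde F_{\mathrm{hi}} + \tilde F_{\mathrm{lo}}$ according to whether $|\tau \pm \langle\xi\rangle|$ is $\ge 1$ or $<1$. On the high-modulation piece the Duhamel operator acts, modulo the time cut-off, as division by $i(\tau \pm \langle\xi\rangle)$ on the Fourier side, and since $b - 1 \le 0$ this gains exactly the one unit of modulation required to pass from a bound in $X^{s,b-1}_\pm$ to one in $X^{s,b}_\pm$. For the low-modulation piece, I would exploit the fact that $\tau \pm \langle\xi\rangle$ is bounded on its Fourier support to expand $\chi(t)\int_0^t e^{\mp i(t-s)\langle D\rangle}\tilde F_{\mathrm{lo}}(s)\,ds$ as a convergent power series in $t$ whose coefficients are free evolutions of $H^s$ data; each term is then controlled by the previous paragraph after modifying the cut-off, and the series converges because of the gain in modulation. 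The one mildly delicate step is the low-modulation expansion, which is where the hypothesis $b \le 1$ enters and which accounts for essentially all of the bookkeeping; this is routine and already carried out in the cited reference, so I would not reproduce it here.
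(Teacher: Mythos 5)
Your argument is the standard cut-off-plus-Duhamel proof with the high/low modulation splitting from \cite[Section 2.6]{T2006}, which is precisely the reference the paper cites in lieu of giving its own proof, so you are taking the same route and the sketch is correct (the roles of $b>1/2$ in the boundary term and $b\le 1$ in the low-modulation expansion are exactly as you indicate). One cosmetic quibble: with the paper's conventions the homogeneous equation $(i\partial_t \pm \langle D\rangle)u=0$ is propagated by $e^{\pm it\langle D\rangle}$ rather than $e^{\mp it\langle D\rangle}$, but this sign does not affect the argument.
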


Since $X^{s, b}_\pm(I) \subset C(I;H^s)$ for $b > 1/2$, and since uniqueness holds for \eqref{LinearProblem} in the latter space (that is, if $u \in C(I;H^s)$ satisfies \eqref{LinearProblem} with $(f,F) = (0,0)$, then $u=0$), we conclude that the following holds.

\begin{cor}\label{EnergyCorollary}
Under the hypotheses of Lemma \ref{energy}, if $u \in C(I;H^s)$ satisfies \eqref{LinearProblem}, then $u \in X^{s, b}_\pm(I)$ and satisfies \eqref{energyest}.
\end{cor}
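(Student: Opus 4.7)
The statement is essentially a direct combination of Lemma \ref{energy} with the uniqueness assertion for \eqref{LinearProblem} in $C(I;H^s)$ recorded just above the corollary, so the plan is extremely short.

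First I would invoke Lemma \ref{energy} with the given data $(f,F) \in H^s \times X_\pm^{s,b-1}(I)$ to produce a solution $v \in X_\pm^{s,b}(I)$ of \eqref{LinearProblem} that obeys the bound \eqref{energyest}. Since $b > 1/2$, the embedding $X_\pm^{s,b}(I) \hookrightarrow C(I;H^s)$ recalled in the Preliminaries gives $v \in C(I;H^s)$ with $v(0)=f$.

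Next I would set $w = u - v$. Then $w \in C(I;H^s)$, and because both $u$ and $v$ solve \eqref{LinearProblem} with the same right-hand side and the same initial datum, the difference $w$ solves \eqref{LinearProblem} with $(f,F)=(0,0)$. The uniqueness statement quoted just before the corollary therefore forces $w \equiv 0$, i.e.\ $u = v$.

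Consequently $u \in X_\pm^{s,b}(I)$ and inherits the bound $\|u\|_{X_\pm^{s,b}(I)} = \|v\|_{X_\pm^{s,b}(I)} \lesssim \|f\|_{H^s} + \|F\|_{X_\pm^{s,b-1}(I)}$. There is really no obstacle here; the only substantive input is the $C(I;H^s)$-uniqueness for the half-wave equation, which is standard (for instance, applying $\langle D\rangle^s$ reduces to $L^2$, and the $L^2$ energy identity for $(i\partial_t \pm \langle D\rangle)w = 0$ yields $\|w(t)\|_{L^2}^2 = \|w(0)\|_{L^2}^2 = 0$).
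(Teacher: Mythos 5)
Your argument is correct and is exactly the one the paper intends: it constructs the solution $v \in X^{s,b}_\pm(I)$ from Lemma \ref{energy}, uses the embedding $X^{s,b}_\pm(I) \subset C(I;H^s)$ for $b>1/2$, and concludes $u=v$ from the $C(I;H^s)$-uniqueness for \eqref{LinearProblem} stated just before the corollary. No differences worth noting.
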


As we see from the last two results, it will be necessary to estimate the norms of the nonlinearities $M_{\mu}$ and $N$.  For this we will rely on the well-known Sobolev product estimate, which we now state.

\begin{lem}\label{prodsob}
If $s_0, s_1, s_2 \in \mathbb{R}$, then the 2d product estimate
\begin{equation}
\| fg \|_{H^{-s_0}} \lesssim \| f \|_{H^{s_1}} \| g \|_{H^{s_2}} 
\end{equation} 
holds if and only if $s_0$, $s_1$, and $s_2$ satisfy
\[\begin{aligned}
s_0 + s_1 + s_2 & \geq 1 \\
s_0 + s_1 + s_2 & \geq \max (s_0, s_1, s_2),
\end{aligned}\]
where at most one of the inequalities above can be an equality.
\end{lem}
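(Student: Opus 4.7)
The plan is to reduce the estimate to a symmetric trilinear form via duality and then exploit a Littlewood--Paley decomposition. For the sufficiency direction, I would write $\|fg\|_{H^{-s_0}} = \sup_{\|h\|_{H^{s_0}}=1}|\langle fg,h\rangle|$, so it suffices to prove the symmetric trilinear bound
\[
\left| \int fgh \, dx \right| \lesssim \|f\|_{H^{s_1}} \|g\|_{H^{s_2}} \|h\|_{H^{s_0}}.
\]
By Plancherel this becomes an integral over the plane $\xi_0 + \xi_1 + \xi_2 = 0$ weighted by $\langle\xi_0\rangle^{-s_0}\langle\xi_1\rangle^{-s_1}\langle\xi_2\rangle^{-s_2}$ paired against three $L^2$ functions; the estimate is now manifestly symmetric in $(s_0,s_1,s_2)$.

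Next I would Littlewood--Paley decompose each factor into pieces $P_{N_i} u_i$ indexed by dyadic $N_i$, and estimate the contribution of each triple $(N_0,N_1,N_2)$. The convolution constraint forces the two largest of the $N_i$ to be comparable; denote this common size by $N$ and the remaining one by $M \le N$. On each dyadic block I would combine H\"older with the two-dimensional Bernstein inequality $\|P_M u\|_\infty \lesssim M \|P_M u\|_2$ (this is the source of the scale-invariant threshold $s=1$) to obtain the pairing estimate $\lesssim M \prod \|P_{N_i} u_i\|_2$. After stripping the Sobolev weights, summation in the small frequency $M$ is geometric and converges precisely when $s_0+s_1+s_2 \geq 1$, while summation in the two comparable large frequencies converges precisely when each pair satisfies $s_i+s_j \geq 0$, which is the condition $s_0+s_1+s_2 \geq \max(s_0,s_1,s_2)$. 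The hypothesis that at most one of the two inequalities be an equality prevents a double-endpoint, doubly-logarithmic divergence; the single remaining endpoint is handled by absorbing one logarithm into the strict inequality in the other parameter (via Cauchy--Schwarz in the dyadic index).

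For necessity, I would test the inequality on two families of examples. First, take $f,g$ with Fourier transforms localized to bumps at frequency $\sim N$; then $\|fg\|_{H^{-s_0}} \sim N^{1-s_0}\|f\|_2\|g\|_2$ (the gain $N$ coming from Bernstein in two dimensions), while the right-hand side scales like $N^{s_1+s_2}\|f\|_2\|g\|_2$, forcing $s_0+s_1+s_2 \geq 1$. Second, take $f$ localized to frequency $\sim N$ and $g$ to frequency $\sim 1$, which produces a product at frequency $\sim N$ and forces $s_0+s_1 \geq 0$; the permuted variants then yield $s_0+s_1+s_2 \geq \max_i s_i$. The prohibition against two simultaneous equalities follows by superposing many dyadic scales to produce a logarithmic blow-up.

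The principal obstacle I anticipate is the bookkeeping at the endpoints: when exactly one of the two inequalities is tight, the dyadic sum picks up a single logarithmic factor that must be reabsorbed using the strict inequality in the other parameter; when both are tight, the logarithms multiply and the estimate genuinely fails, which is precisely the content of the final clause of the lemma.
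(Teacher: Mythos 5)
The paper does not prove this lemma at all: it is invoked as ``the well-known Sobolev product estimate'' (it is the standard $2$d multiplication law for $H^s$ spaces, as tabulated for instance in d'Ancona--Foschi--Selberg's atlas of products), so there is no in-paper argument to compare yours against. Your outline is the standard proof and is essentially sound: dualize to the symmetric trilinear form, decompose dyadically, use the convolution constraint to force the two largest frequencies to be comparable, apply H\"older plus $2$d Bernstein on the lowest-frequency factor to gain the factor $M$ (whence the threshold $n/2=1$), and sum; the knapsack-type counterexamples you describe (three comparable frequencies for the condition $s_0+s_1+s_2\ge 1$; one high and one low frequency, permuted through the symmetric form, for $s_0+s_1+s_2\ge\max_i s_i$; superposition of dyadic scales for the double-endpoint failure) are exactly the right ones for necessity. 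One bookkeeping point deserves care when you write this out: in the configuration $N_0=M\ll N_1\sim N_2=N$, the condition produced by the sum over $M$ depends on the sign of $1-s_0$ --- if $s_0<1$ the $M$-sum contributes $N^{1-s_0}$ and the subsequent $N$-sum yields $s_0+s_1+s_2\ge 1$, whereas if $s_0>1$ the $M$-sum is $O(1)$ and the $N$-sum yields $s_1+s_2\ge 0$; the case $s_0=1$ is where the logarithm appears and must be absorbed by strictness in the other condition. So the two displayed conditions do not split cleanly between ``small-frequency sum'' and ``large-frequency sum'' as your sketch suggests, but the union over the three symmetric configurations does produce exactly the stated set of conditions, and no step of the argument fails.
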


We remark that this lemma shows that the terms appearing in the right-hand side of \eqref{CSH2} make sense as distributions if $\ppm \in C(I ; H^{1})$ and $\apm \in C(I ; H^{1/2})$.

Applying the product law twice, one obtains the following trilinear estimates, which will be used to handle the cubic terms in $M_\mu$ and $N$.

\begin{cor}\label{prodsobcor}
The following 2d product estimates hold for any $\epsilon > 0$:
\begin{align*}
\| fgh \|_{H^{-\frac12-\epsilon}} &\lesssim \| f \|_{H^{-\frac12}} \| g \|_{H^{1}} \| h \|_{H^{1}},
\\
\| fgh \|_{H^{-\frac12-\epsilon}} &\lesssim \| f \|_{H^{\frac12}} \| g \|_{L^2} \| h \|_{H^{1}}
\\
\| fgh \|_{H^{-\epsilon}} &\lesssim \| f \|_{H^{\frac12}} \| g \|_{H^{\frac12}} \| h \|_{H^{1}}.
\end{align*}
\end{cor}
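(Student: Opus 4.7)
The plan is to prove each of the three trilinear bounds by grouping two factors, applying Lemma \ref{prodsob} to estimate their product in a suitable intermediate Sobolev space $H^{s}$, and then applying Lemma \ref{prodsob} a second time to multiply by the remaining factor. The only work is choosing the intermediate regularity $s$ so that both applications satisfy the admissibility conditions (the sum condition, the max condition, and the requirement that at most one of them be saturated).

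For the first estimate, I would group $gh$. An application of Lemma \ref{prodsob} with $(s_0,s_1,s_2)=(-(1-\delta),1,1)$ for small $\delta>0$ gives $\|gh\|_{H^{1-\delta}} \lesssim \|g\|_{H^1}\|h\|_{H^1}$ (both conditions strict). Then with $(s_0,s_1,s_2)=(1/2+\epsilon,-1/2,1-\delta)$ and $\delta=\epsilon/2$, Lemma \ref{prodsob} yields $\|f\cdot(gh)\|_{H^{-1/2-\epsilon}}\lesssim\|f\|_{H^{-1/2}}\|gh\|_{H^{1-\delta}}$, which combines to give the desired bound.

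For the second estimate, I would group $fg$. Lemma \ref{prodsob} with $(s_0,s_1,s_2)=(1/2,1/2,0)$ (one equality in the sum, strict max) gives $\|fg\|_{H^{-1/2}}\lesssim\|f\|_{H^{1/2}}\|g\|_{L^2}$. Then $(s_0,s_1,s_2)=(1/2+\epsilon,-1/2,1)$ (strict in both) yields $\|(fg)\cdot h\|_{H^{-1/2-\epsilon}}\lesssim\|fg\|_{H^{-1/2}}\|h\|_{H^1}$. For the third estimate, I would again group $fg$. Lemma \ref{prodsob} with $(s_0,s_1,s_2)=(0,1/2,1/2)$ (one equality) gives $\|fg\|_{L^2}\lesssim\|f\|_{H^{1/2}}\|g\|_{H^{1/2}}$; then $(s_0,s_1,s_2)=(\epsilon,0,1)$ (strict in both) gives $\|(fg)h\|_{H^{-\epsilon}}\lesssim\|fg\|_{L^2}\|h\|_{H^1}$.

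There is no real obstacle here; the main care is just bookkeeping to ensure that in each of the two applications of Lemma \ref{prodsob} the pair of admissibility inequalities is satisfied with at most one equality. Introducing a small auxiliary parameter $\delta\in(0,\epsilon)$ in the first estimate handles the only case where naively taking the endpoint $s=1$ would produce two simultaneous equalities.
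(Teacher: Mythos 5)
Your proposal is correct, and it is exactly the argument the paper intends: the paper simply states that the corollary follows by ``applying the product law twice,'' and your choices of intermediate exponents all satisfy the admissibility conditions of Lemma \ref{prodsob} with at most one equality in each application. The small parameter $\delta$ in the first estimate is a sensible way to avoid the doubly-saturated endpoint, and the remaining two estimates are handled with the standard pairings.
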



\section{Basic Estimates for $\apm$ and $\ppm$}\label{basic}

Assume that $\ppm \in C(I ; H^{1})$ and $\apm \in C(I ; H^{1/2})$ satisfy \eqref{CSH2}.  As a first step towards proving Proposition \ref{mainthm2}, we first prove that $\ppm$ and $\apm$ belong to some rough spaces.  Using Corollary \ref{EnergyCorollary}, we will use these rough estimates to increase the regularity later.  We begin with the following lemma.

\begin{lem}\label{cont}
If $u \in C(I ; H^{s})$, then $u \in \x{s}{0}(I)$ and
\[
\| u \|_{\x{s}{0}(I)} \leq | I |^{1/2} \| u \|_{L^{\infty}_t H^s_x (I \times \mr{2})}.
\]
\end{lem}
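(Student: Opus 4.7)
The plan is to exploit the identification $X^{s,0}_\pm = L^2_t H^s_x$ noted in the preliminaries, so that the restriction norm $\| u \|_{\x{s}{0}(I)}$ reduces to an infimum over $L^2_t H^s_x$ extensions of $u$. Once we are in this setting, the bound is produced by the most naive extension possible, namely extension by zero outside $I$.

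More precisely, I would first recall that by the definition of the restriction norm,
\[
  \| u \|_{\x{s}{0}(I)} = \inf\bigl\{ \| v \|_{L^2_t H^s_x(\mathbb R \times \mathbb R^2)} : v|_{I \times \mathbb R^2} = u \bigr\},
\]
so it suffices to exhibit any single admissible extension $v$ with the stated bound. Since $u \in C(I;H^s)$, the function
\[
  v(t,x) = \begin{cases} u(t,x), & t \in I, \\ 0, & t \notin I \end{cases}
\]
is strongly measurable into $H^s$ and hence lies in $L^2_t H^s_x(\mathbb R\times\mathbb R^2)$ provided the right-hand side below is finite.

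Then I would simply compute
\[
  \| v \|_{L^2_t H^s_x}^2 = \int_I \| u(t) \|_{H^s}^2 \, dt \le |I| \cdot \| u \|_{L^\infty_t H^s_x(I\times \mathbb R^2)}^2,
\]
take square roots, and pass to the infimum to conclude. The only mildly delicate point is the first identification, i.e., making sure that an arbitrary element of $L^2_t H^s_x$ qualifies as an admissible extension in the definition of the restriction space (which initially used extensions in the Schwartz-closure space); this is handled by the fact that $X^{s,0}_\pm$ and $L^2_t H^s_x$ coincide as Banach spaces, so nothing beyond an extension in $L^2_t H^s_x$ is required. With that observation in place there is no real obstacle, and the estimate follows essentially by inspection.
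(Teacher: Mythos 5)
Your proof is correct and follows essentially the same route as the paper: both rest on the identification $X_\pm^{s,0} = L^2_t H^s_x$ together with the trivial $L^2$-versus-$L^\infty$ bound on a finite interval (H\"older's inequality in $t$). You merely spell out the extension-by-zero step that the paper leaves implicit in its one-line proof.
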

\begin{proof}
This follows from H\"older's inequality in $t$, since $\x{s}{0}(I) = L^2_t (I; H^s_x )$.
\end{proof}
  This lemma immediately implies our initial estimate for $\ppm$ and $\apm$.
\begin{prop}\label{inclusion}
If $\ppm \in C(I ; H^{1})$ and $\apm \in C(I ; H^{1/2})$, then $\ppm \in \x{1}{0}$ and $\apm \in \x{\frac{1}{2}}{0}$.
\end{prop}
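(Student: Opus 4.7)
The plan is essentially a one-step application of the preceding lemma. Proposition \ref{inclusion} concerns two independent statements, one for $\phi_\pm$ and one for $A_{\mu,\pm}$, each of which asserts exactly the conclusion of Lemma \ref{cont} for a particular choice of $s$.

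First, I would take $u = \phi_\pm$ and $s = 1$ in Lemma \ref{cont}. Since by hypothesis $\phi_\pm \in C(I;H^1)$, the lemma immediately yields $\phi_\pm \in X_\pm^{1,0}(I)$ together with the quantitative bound
\[
\| \phi_\pm \|_{X_\pm^{1,0}(I)} \le |I|^{1/2} \| \phi_\pm \|_{L_t^\infty H_x^1(I \times \mathbb{R}^2)}.
\]
Next, I would take $u = A_{\mu,\pm}$ and $s = 1/2$ in the same lemma. The hypothesis $A_{\mu,\pm} \in C(I;H^{1/2})$ then gives $A_{\mu,\pm} \in X_\pm^{1/2,0}(I)$ with a completely analogous bound.

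There is no real obstacle here: the whole content of the proposition is the identification $X_\pm^{s,0}(I) = L^2_t(I;H^s_x)$ together with the compactness of $I$, both of which are already folded into the statement and proof of Lemma \ref{cont}. The role of Proposition \ref{inclusion} in the paper is simply to record the consequence of Lemma \ref{cont} in the exact form that will be needed as the starting point of the bootstrap argument in the subsequent sections, where these $b=0$ regularity statements will be fed into the nonlinearities $M_\mu$ and $N$ and then upgraded via Corollary \ref{EnergyCorollary} to higher values of $b$.
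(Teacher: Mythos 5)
Your proposal is correct and matches the paper exactly: the proposition is stated as an immediate consequence of Lemma \ref{cont}, applied with $s=1$ to $\phi_\pm$ and with $s=\tfrac12$ to $A_{\mu,\pm}$. Nothing further is needed.
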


As a second step, we would like to determine values $s_1$ and $s_2$ for which $\ppm \in \x{s_1}{1}$ and $\apm \in \x{s_2}{1}$.  These will be useful for interpolating with the previous results, leading to a range of estimates which will be used below.  
%

\begin{prop}\label{est1}
If $\ppm \in C(I ; H^{1})$ and $\apm \in C(I ; H^{1/2})$ are solutions to \eqref{CSH2}, then $\ppm \in \x{\frac{1}{2}}{1}$ and $\apm \in \x{-\epsilon}{1}$ for all $\epsilon > 0$.
\end{prop}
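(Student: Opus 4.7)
The plan is to apply Corollary \ref{EnergyCorollary} with $b = 1$ to each equation in \eqref{CSH2}. Since $f_\pm \in H^1 \hookrightarrow H^{1/2}$, obtaining $\phi_\pm \in X_\pm^{1/2,1}(I)$ reduces to showing that the forcing $\tfrac12 \langle \nabla \rangle^{-1} N(A,\phi)$ lies in $X_\pm^{1/2,0}(I) = L^2_t(I; H^{1/2}_x)$, equivalently $N(A,\phi) \in L^2_t(I; H^{-1/2}_x)$. Similarly, since $a_{\mu,\pm} \in H^{1/2} \hookrightarrow H^{-\epsilon}$, obtaining $A_{\mu,\pm} \in X_\pm^{-\epsilon,1}(I)$ reduces to $M_\mu(A,\phi) \in L^2_t(I; H^{-1-\epsilon}_x)$. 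Since $I$ is bounded, H\"older in time (in the spirit of Lemma \ref{cont}) upgrades any $L^\infty_t$ bound to an $L^2_t$ bound, so I will work pointwise in $t$, using only the hypotheses $\phi_\pm \in L^\infty_t H^1_x$ and $A_{\mu,\pm} \in L^\infty_t H^{1/2}_x$.

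For the Higgs nonlinearity $N(A,\phi) = 2i A_\mu \partial^\mu \phi + A_\mu A^\mu \phi - \phi V'(|\phi|^2) + \phi$, I estimate each term in $H^{-1/2}_x$. The critical piece is the quadratic term: Lemma \ref{prodsob} with $(s_0,s_1,s_2) = (1/2, 1/2, 0)$ yields $\|A_\mu \partial^\mu \phi\|_{H^{-1/2}} \lesssim \|A_\mu\|_{H^{1/2}}\|\partial^\mu \phi\|_{L^2}$, where the inequality $s_0+s_1+s_2 \ge 1$ is saturated (the single permitted equality) while the max condition is strict. The cubic term is handled by the third estimate of Corollary \ref{prodsobcor}, giving $\|A_\mu A^\mu \phi\|_{H^{-\epsilon}} \lesssim \|A\|_{H^{1/2}}^2 \|\phi\|_{H^1}$, and $H^{-\epsilon} \hookrightarrow H^{-1/2}$. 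The polynomial $\phi V'(|\phi|^2)$ and the linear $\phi$ are controlled by iterating Lemma \ref{prodsob}, using the two-dimensional embedding $H^1 \hookrightarrow L^p$ for every $p<\infty$; they land comfortably in $H^{1-C\epsilon} \hookrightarrow H^{-1/2}$.

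For the gauge nonlinearity $M_\mu$ I estimate each term in $H^{-1-\epsilon}_x$. The null-form piece $Q^{\nu\rho}(\bar\phi,\phi)$ is a sum of products $\partial\bar\phi \cdot \partial \phi$ of $L^2_x$-factors, and Lemma \ref{prodsob} with $(s_0,s_1,s_2) = (1+\epsilon,0,0)$ gives $\|\partial\bar\phi \cdot \partial \phi\|_{H^{-1-\epsilon}} \lesssim \|\phi\|_{H^1}^2$. For the cubic divergence $\partial^\nu(A^\rho |\phi|^2)$, two applications of Lemma \ref{prodsob} yield first $|\phi|^2 \in H^{1-\epsilon}_x$ and then $A^\rho |\phi|^2 \in H^{1/2-\epsilon}_x$; distributing one derivative then places the result in $H^{-1/2-\epsilon} \hookrightarrow H^{-1-\epsilon}$. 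The linear term $A_\mu \in H^{1/2} \hookrightarrow H^{-1-\epsilon}$ is immediate. The main obstacle is clearly the quadratic Higgs term $A_\mu \partial^\mu \phi$, whose estimate saturates the Sobolev product law and is exactly what forces the drop from the data regularity $s=1$ to $s=1/2$ for $\phi_\pm$; no sharper single-shot bound is available without invoking a null structure, which is precisely why a bootstrap argument in the subsequent sections will be needed to climb up from the rough exponents of this proposition to the regularity required in Proposition \ref{mainthm2}.
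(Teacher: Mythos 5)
Your proposal is correct and follows essentially the same route as the paper: reduce via Corollary \ref{EnergyCorollary} and H\"older in time to fixed-time Sobolev bounds on the nonlinearities, then apply Lemma \ref{prodsob} and Corollary \ref{prodsobcor} term by term, with the same critical exponent choices (e.g.\ $(1/2,1/2,0)$ for the quadratic term and $(1+\epsilon,0,0)$ for the null form). The only cosmetic difference is that you estimate $A_\mu\partial^\mu\phi$ directly rather than through the decomposition $B_1-B_2-B_3$, which is immaterial at this stage since the null structure is not being exploited yet.
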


\begin{proof}
By Corollary \ref{EnergyCorollary}, we need only show that $\| NÊ\|_{\x{-1/2}{0}}$ and $\| M_{\mu}Ê\|_{\x{-\epsilon-1}{0}}$ are bounded.  As was seen in Lemma \ref{cont}, it suffices to show that $N$ and $M_{\mu}$ are bounded in $\lt \hx{-1/2}$ and $\lt \hx{- \epsilon - 1}$, respectively.  Starting with $M_{\mu}$, recall that
\[
M_{\mu} = -\epsilon_{\mu\nu\rho} \textrm{Im}\ Q^{\nu\rho} \left(\overline{\phi}, \phi \right) + 2 \epsilon_{\mu\nu\rho} \partial^{\nu} \left( A^\rho | \phi |^2 \right) + A_\mu.
\]
Apply Lemma \ref{prodsob} and Corollary \ref{prodsobcor}, respectively, to the first two terms, we obtain
\[\begin{aligned}
\left\| Q_{\nu \rho}\left( \overline{\phi}, \phi \right) \right\|_{\lt \hx{- \epsilon - 1}} & \lesssim \| \phi \|^{2}_{\lt \hx{1}} \\
\left\| \partial_{\nu} \left( A_\rho | \phi |^2 \right) \right\|_{\lt \hx{- \epsilon - 1}} & \lesssim \| A_{\rho} \|_{\lt \hx{1/2}} \| \phi \|^{2}_{\lt \hx{1}}.
\end{aligned}\]
In addition, we have the trivial estimate
\[
\| A_{\mu} \|_{\lt \hx{- \epsilon - 1}} \lesssim \| A_{\mu} \|_{\lt \hx{1/2}}.
\]
It follows that $\| M_{\mu}Ê\|_{\x{-\epsilon-1}{0}} < \infty$.

Next, recall that $N = 2i ( B_1 - B_2 - B_3 ) + A_{\mu} A^{\mu} \phi - \phi V'\left( |\phi|^2 \right) + \phi$, with $B_i$ as defined in \eqref{defs}.  Using the fact that
\[
\| R_{i} \psi \|_{\lt \hx{s}} \lesssim \sum_{\mu} \| A_{\mu} \|_{\lt \hx{s}},
\]
we can apply Lemma \ref{prodsob} to obtain
\[
\| B_i \|_{\lt \hx{-1/2}} \lesssim \left( \sum_{\mu} \| A_{\mu} \|_{\lt \hx{1/2}} \right) \| \phi \|_{\lt \hx{1}} 
\]
for $i = 1, 2, 3$.  By Corollary \ref{prodsobcor},
\[
\| A_\mu A^\mu \phi \|_{\lt \hx{-1/2}} \lesssim \left( \sum_{\mu} \| A_{\mu} \|^2_{\lt \hx{1/2}} \right) \| \phi \|_{\lt \hx{1}}. 
\]
For the final term in $N$, we have the trivial observation
\[
\| \phi \|_{\lt \hx{-1/2}} \lesssim \| \phi \|_{\lt \hx{1}}.
\]

We are left with the task of estimating the term $\phi V'(|Ê\phi |^2)$.  Recall that $V$ is a polynomial.  Thus, it suffices to consider terms of the form $\phi | \phi |^{2k}$ for integer $k$.  Applying Lemma \ref{prodsob} yields
\[
\| \phi | \phi |^{2k} \|_{\lt \hx{-\epsilon}} \lesssim \| \phi \|_{\lt \hx{1}} \left\| | \phi |^{2k} \right\|_{\lt L^2_x}.
\]
If we make the observation that
\[
\left\| | \phi |^{2k} \right\|_{\lt L^2_x} = \| \phi \|_{\lt L^{4k}_x}^{2k},
\]
we can then obtain the estimate
\[\begin{aligned}
\| \phi | \phi |^{2k} \|_{\lt \hx{-\epsilon}} & \lesssim \| \phi \|_{\lt \hx{1}} \left\| \phi \right\|^{2k}_{\lt L^{4k}_x} \\
						  & \lesssim \| \phi \|_{\lt \hx{1}} \left\| \phi \right\|^{2k}_{\lt \hx{1}}
\end{aligned}\]
by Sobolev embedding, for all $k \geq 0$.  It follows that 
\begin{equation}\label{2k}
\left\| \phi V'\left(| \phi |^{2}\right) \right\|_{\lt \hx{-\epsilon}} \lesssim \| \phi \|_{\lt \hx{1}} V'\left(\left\| \phi \right\|^{2}_{\lt \hx{1}}\right).
\end{equation}
Combining all the estimates, we can conclude that
\[
\| N \|_{\lt \hx{-1/2}} < \infty.
\]

\end{proof}

Interpolating the results from Propositions \ref{inclusion} and \ref{est1}, we obtain the following estimate, which is the primary estimate from this section.

\begin{prop}\label{interpolate}
If $\ppm \in C(I; H^1)$, $\apm \in C(I; H^{1/2})$ are solutions to \eqref{CSH2}, then
\begin{align*}
\ppm & \in X_\pm^{\theta + \frac{1}{2}(1-\theta),1 - \theta}(I), \\
\apm & \in X_\pm^{\frac{\theta}{2} - \delta(1-\theta),1 - \theta}(I),
\end{align*}
for all $\theta \in [0,1]$ and $\delta > 0$.
\end{prop}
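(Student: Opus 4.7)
The plan is to interpolate between the two endpoint inclusions already established. Proposition \ref{inclusion} gives $\ppm \in \x{1}{0}(I)$ and $\apm \in \x{1/2}{0}(I)$, while Proposition \ref{est1} gives $\ppm \in \x{1/2}{1}(I)$ and $\apm \in \x{-\epsilon}{1}(I)$ for every $\epsilon > 0$. Taking convex combinations indexed by $\theta \in [0,1]$ (with $\theta$ tracking the weight placed on the first pair of endpoints) produces exactly the exponents in the statement: for $\ppm$ the regularity index is $(1-\theta)\tfrac12+\theta\cdot 1 = \theta + \tfrac12(1-\theta)$ and the wave weight is $(1-\theta)\cdot 1 + \theta\cdot 0 = 1-\theta$, while for $\apm$ the regularity is $(1-\theta)(-\epsilon)+\theta\cdot\tfrac12=\tfrac{\theta}{2}-\epsilon(1-\theta)$, which is the claimed exponent with $\delta:=\epsilon$. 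The boundary cases $\theta=0$ and $\theta=1$ reduce to Propositions \ref{est1} and \ref{inclusion} directly.

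The interpolation mechanism is just H\"older's inequality applied to the defining integral of the $\x{s}{b}$-norm. For any $u$ belonging to both $\x{s_0}{b_0}$ and $\x{s_1}{b_1}$, writing
\[
\langle\xi\rangle^{s_\theta}\langle-\tau\pm|\xi|\rangle^{b_\theta}|\widetilde u|
= \bigl(\langle\xi\rangle^{s_0}\langle-\tau\pm|\xi|\rangle^{b_0}|\widetilde u|\bigr)^{1-\theta}\bigl(\langle\xi\rangle^{s_1}\langle-\tau\pm|\xi|\rangle^{b_1}|\widetilde u|\bigr)^{\theta}
\]
and applying H\"older in $L^2_{\tau,\xi}$ with exponents $1/(1-\theta)$ and $1/\theta$ yields
\[
\| u\|_{\x{s_\theta}{b_\theta}} \le \|u\|_{\x{s_0}{b_0}}^{1-\theta}\|u\|_{\x{s_1}{b_1}}^{\theta},
\]
where $s_\theta = (1-\theta)s_0+\theta s_1$ and $b_\theta=(1-\theta)b_0+\theta b_1$.

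The one small subtlety is that we are working with the restriction spaces $\x{s}{b}(I)$, so one must exhibit an extension achieving both endpoint bounds simultaneously rather than two separate near-infimizing extensions. This is easily arranged: both endpoint inclusions for $\apm$, $\ppm$ are obtained via Corollary \ref{EnergyCorollary}, which produces a single extension (the solution of the corresponding linear Cauchy problem on $\mr{}$ with an extended forcing term) whose $\x{s}{b}$-norm controls the restriction norm. Applying the H\"older bound above to this common extension and taking infima on the left gives the restriction-space inequality, completing the interpolation. No step here is an obstacle; the content of the proof is the algebraic bookkeeping of endpoints, and the main value of the proposition is that it will be used repeatedly in the next sections to bootstrap regularity on the nonlinear terms in $M_\mu$ and $N$.
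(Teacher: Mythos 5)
Your proof is correct and follows the same route as the paper, which simply states that the proposition is obtained by interpolating Propositions \ref{inclusion} and \ref{est1} and gives no further detail; your H\"older-in-$L^2_{\tau,\xi}$ argument and endpoint bookkeeping are exactly the intended content. Your extra remark on needing a single common extension for the restriction spaces is a genuine point the paper glosses over, and your resolution works (the extension produced by solving the linear problem with the forcing extended by zero is a cut-off free wave outside $I$, hence lies in both endpoint spaces).
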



\section{Improved Estimates for $\ppm$ and $\apm$} \label{improved}
The results of Proposition \ref{interpolate} are not sufficient for us to apply the result of Huh and Oh to obtain unconditional uniqueness.  However, with these basic estimates, we can iterate the argument used in the previous section to increase the regularity of the solutions, which will get us one step closer to the necessary estimates.  The main result for this section is the following.
\begin{prop}\label{est2}
If $\ppm \in C(I; H^1)$, $\apm \in C(I; H^{1/2})$ are solutions to \eqref{CSH2}, then $\ppm \in X_{\pm}^{\frac{3}{4} - \epsilon, \frac{3}{4}}(I)$ and $\apm \in X_{\pm}^{\frac{1}{4}, \frac{3}{4}}(I)$ for all $\epsilon > 0$.
\end{prop}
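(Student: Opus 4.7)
The approach mirrors the proof of Proposition \ref{est1}: by Corollary \ref{EnergyCorollary} applied with $b = 3/4$, it suffices to show that $M_\mu \in X_\pm^{1/4, -1/4}(I)$ and $N \in X_\pm^{3/4 - \epsilon, -1/4}(I)$. The gain over Proposition \ref{est1}, where we were restricted to $b = 1$ (hence $b - 1 = 0$) and therefore had to eat a spatial loss for $\apm$, comes from exploiting the full one-parameter family of memberships furnished by Proposition \ref{interpolate}. This lets us feed each factor in a bilinear nonlinearity into an $X_\pm^{s,b}$ space with $b$ slightly above $1/2$, which is the regime in which sharp bilinear $X^{s,b}_\pm$ estimates close.

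The nonlinear terms split naturally into three groups. For the genuine null-form pieces, namely $Q^{\nu\rho}(\overline{\phi}, \phi)$ in $M_\mu$ and $B_1$, $B_2$ in $N$, I would invoke bilinear null-form estimates in the half-wave $X_\pm^{s,b}$ spaces (of the type developed for related problems in \cite{ST2013, ST2010, KM1994}); these gain essentially one derivative over the Sobolev product law and deliver a bound at $b - 1 = -1/4$ once both inputs have $b$-index just above $1/2$. The non-null but smoothed term $B_3 = \langle \nabla \rangle^{-2} A_\mu \partial^\mu \phi$ is tame because the $\langle \nabla \rangle^{-2}$ prefactor already absorbs the derivative loss, so Lemma \ref{prodsob} at $b = 0$ together with the embedding $X_\pm^{s,0} \hookrightarrow X_\pm^{s,-1/4}$ suffices. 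The cubic and polynomial terms $\partial^\nu(A^\rho |\phi|^2)$, $A_\mu A^\mu \phi$, and $\phi V'(|\phi|^2)$ can be handled exactly as in the proof of Proposition \ref{est1}, via Corollary \ref{prodsobcor} (iterated for the polynomial), Lemma \ref{cont}, and the same trivial embedding into $X_\pm^{s,-1/4}$; the target $s$-indices $1/4$ and $3/4 - \epsilon$ leave ample room. The linear leftovers $A_\mu$ in $M_\mu$ and $\phi$ in $N$ are immediate.

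The main obstacle will be the numerology: choosing $\theta \in [0,1]$ and $\delta > 0$ in Proposition \ref{interpolate} so that, for each null-form bilinear estimate, both factors simultaneously lie in $X_\pm^{s,b}$ spaces with $b > 1/2$ and with enough spatial regularity to close at the required target. Since Proposition \ref{interpolate} only provides the one-parameter line $\ppm \in X_\pm^{1-\theta/2, 1-\theta}$, $\apm \in X_\pm^{\theta/2 - \delta(1-\theta), 1-\theta}$, different bilinear terms may demand different $\theta$ and the trade-off is tight: $\theta$ slightly less than $1/2$ barely clears the $b > 1/2$ threshold while still reaching $s = 3/4$ for $\ppm$, and for $\apm$ the target $s = 1/4$ forces $\theta$ close to $1/2$ with $\delta$ chosen small. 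Once these parameters are tuned and the standard bilinear null-form estimates are in hand, the proof reduces to bookkeeping for each term, after which Corollary \ref{EnergyCorollary} concludes.
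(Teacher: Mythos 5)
Your overall skeleton matches the paper's: reduce via Corollary \ref{EnergyCorollary} to bounding the nonlinearities in $X_\pm^{s,b-1}$ norms, split $M_\mu$ and $N$ into null-form, smoothed, cubic/polynomial, and linear pieces, and feed in the family of norms from Proposition \ref{interpolate}. (Minor slip: the reduction should read $M_\mu \in \x{-3/4}{-1/4}$ and $N \in \x{-1/4-\epsilon}{-1/4}$, i.e.\ the targets for $M_\mu$, $N$ themselves rather than for $\langle\nabla\rangle^{-1}M_\mu$, $\langle\nabla\rangle^{-1}N$.) The treatment of $B_3$, the cubic and polynomial terms, and the linear leftovers is essentially what the paper does.

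The genuine gap is in the null-form terms, which are the whole point of the proposition. You defer to ``bilinear null-form estimates in the half-wave $X_\pm^{s,b}$ spaces'' that ``close once both inputs have $b$-index just above $1/2$,'' and you explicitly leave the numerology --- the choice of $\theta$ and the verification that such estimates hold at these indices --- unresolved. But there is no off-the-shelf black box to invoke here at exactly these regularities, and the regime you describe (both inputs in $X_\pm^{s,b}$ with $b>1/2$ and a genuinely bilinear $X^{s,b}$ estimate) is not the mechanism that works. The paper instead uses Lemma \ref{null} with the specific exponents $a=1/4$, $b=c=1/2$: the angular factor in the null symbol is traded for a sum of three terms, each carrying a power of a modulation weight on exactly one of the three functions (output or one of the two inputs) together with a corresponding gain of derivatives on the low frequency. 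The output weight to the power $1/4$ is absorbed by the target exponent $-1/4$; an input weight to the power $1/2$ is absorbed by placing that factor in $X_\pm^{s,1/2}$, which is available from Proposition \ref{interpolate} precisely at $\theta=1/2$ (giving $\ppm \in \x{3/4}{1/2}$ and $\apm \in \x{1/4-\delta}{1/2}$), while the \emph{other} factor is placed in $\lt\hx{1}$ or $\lt\hx{1/2}$ directly from the hypothesis. This reduces each null form to the explicit lists \eqref{s1} and \eqref{t1} of pure product estimates, each proved by discarding the remaining output weight, applying Lemma \ref{prodsob}, and using H\"older in time ($L^2_t \times L^\infty_t \to L^2_t$). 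Without this concrete decomposition (or an equivalent explicit set of bilinear estimates with verification), your argument for $Q^{\nu\rho}(\overline\phi,\phi)$, $B_1$, and $B_2$ is a plan rather than a proof, and it is exactly at the step you flag as ``the main obstacle'' that the proof has to be supplied.
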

  Its proof will require us to make use of the null structure of the nonlinearities.  We do this via the following lemma from \cite{ST2013b}. If $\alpha$ and $\beta$ be non-zero vectors in $\mathbb{R}^2$, we denote by $\theta(\alpha, \beta)$ the angle between them.

\begin{lem}\label{null}
For all signs $(\pm,\pm_1, \pm_2)$, all $a, b, c \in [0,1/2]$, all $\lambda, \eta \in \mathbb{R}$, and all non-zero $\eta,\zeta \in \mathbb R^2$, we have the estimate
\[
\theta(\pm_1\eta,\pm_2\zeta) \lesssim \left( \frac{\langle -(\lambda+\mu) \pm | \eta + \zeta | \rangle}{\min (\langle \eta \rangle, \langle \zeta \rangle)} \right)^a + \left( \frac{\langle - \lambda \pm_1 | \eta | \rangle}{\min (\langle \eta \rangle, \langle \zeta \rangle)} \right)^b + \left( \frac{\langle - \mu \pm_2 | \zeta | \rangle}{\min (\langle \eta \rangle, \langle \zeta \rangle)} \right)^c.
\]
\end{lem}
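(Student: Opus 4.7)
The plan is to reduce the angle estimate to an algebraic identity for the three modulations, then use a geometric ``triangle defect'' inequality to produce a quadratic bound on $\theta$, and finally upgrade that bound to arbitrary exponents in $[0,1/2]$. Setting
$$\sigma_0 = -(\lambda+\mu) \pm|\eta+\zeta|, \quad \sigma_1 = -\lambda \pm_1 |\eta|, \quad \sigma_2 = -\mu \pm_2 |\zeta|,$$
I would first observe that $\sigma_0 - \sigma_1 - \sigma_2 = \pm|\eta+\zeta| \mp_1 |\eta| \mp_2 |\zeta| =: S$, so that $|S| \le \langle\sigma_0\rangle + \langle\sigma_1\rangle + \langle\sigma_2\rangle$.

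The heart of the argument is the pointwise bound
$$|S| \;\gtrsim\; \min(|\eta|,|\zeta|)\,\sin^2(\theta/2), \qquad \theta := \theta(\pm_1\eta,\pm_2\zeta) \in [0,\pi],$$
which I would verify by a short case analysis on the signs. Using $\eta\cdot\zeta = (\pm_1\pm_2)|\eta||\zeta|\cos\theta$ (since $\theta(\pm_1\eta,\pm_2\zeta) = \pi - \theta(\eta,\zeta)$ when $\pm_1 \ne \pm_2$, and equals $\theta(\eta,\zeta)$ otherwise), one obtains the two cosine-rule identities
$$4|\eta||\zeta|\sin^2(\theta/2) = (|\eta|+|\zeta|)^2 - |\eta+\zeta|^2 \quad\text{if } \pm_1=\pm_2,$$
$$4|\eta||\zeta|\sin^2(\theta/2) = |\eta+\zeta|^2 - (|\eta|-|\zeta|)^2 \quad\text{if } \pm_1\ne\pm_2.$$
Up to an overall sign, the eight configurations of $(\pm,\pm_1,\pm_2)$ reduce $S$ to one of the three forms $(|\eta|+|\zeta|) - |\eta+\zeta|$, $|\eta+\zeta| - \bigl||\eta|-|\zeta|\bigr|$, or $|\eta+\zeta|+|\eta|+|\zeta|$. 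The first two are nonnegative by the triangle inequality; multiplying by the appropriate conjugate factor converts $|S|$ into $4|\eta||\zeta|\sin^2(\theta/2)$ divided by a quantity bounded by $4\max(|\eta|,|\zeta|)$, which yields the claim. The third case is trivial since $|S| \gtrsim \min(|\eta|,|\zeta|) \ge \min(|\eta|,|\zeta|)\sin^2(\theta/2)$.

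Combining this geometric bound with $|S|\le\sum_i\langle\sigma_i\rangle$ and using $\theta\lesssim\sin(\theta/2)$ on $[0,\pi]$ gives
$$\theta^2 \;\lesssim\; \frac{\langle\sigma_0\rangle+\langle\sigma_1\rangle+\langle\sigma_2\rangle}{\min(|\eta|,|\zeta|)}.$$
If $\min(|\eta|,|\zeta|)\ge 1$, the denominator may be replaced by $\min(\langle\eta\rangle,\langle\zeta\rangle)$ at the cost of an absolute constant; if instead $\min(|\eta|,|\zeta|) < 1$, then $\min(\langle\eta\rangle,\langle\zeta\rangle) \sim 1$ and the inequality is trivial since $\theta^2 \lesssim 1 \le \sum_i\langle\sigma_i\rangle$. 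Taking a square root and using $\sqrt{a+b+c}\le\sqrt a + \sqrt b + \sqrt c$ yields $\theta \lesssim X^{1/2}+Y^{1/2}+Z^{1/2}$, where $X,Y,Z$ are the three ratios in the statement.

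It remains to trade the exponent $1/2$ for arbitrary $a,b,c\in[0,1/2]$, which I would do termwise: if $X\le 1$ then $X^{1/2}\le X^a$ since $a\le 1/2$; if $X>1$ then $X^a\ge 1\gtrsim\theta$, so $\theta\lesssim X^a$ already on its own. The same dichotomy applied to $Y$ and $Z$ completes the proof. The main obstacle is the geometric step: expressing $|S|$ in its various sign-dependent guises and matching each one to the correct conjugate factor so that the cosine-rule identity applies. Once that bookkeeping is done, the rest is essentially elementary.
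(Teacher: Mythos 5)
Your proof is correct and follows what is essentially the standard (and, as far as one can tell, the intended) argument: the paper itself gives no proof of this lemma, citing \cite{ST2013b}, where the estimate is obtained in exactly this way --- the algebraic identity $\sigma_0-\sigma_1-\sigma_2=\pm|\eta+\zeta|\mp_1|\eta|\mp_2|\zeta|$, the law-of-cosines/conjugate-factor computation giving $|S|\gtrsim\min(|\eta|,|\zeta|)\sin^2(\theta/2)$, and the dichotomy $X\le1$ versus $X>1$ to lower the exponents from $1/2$ to arbitrary $a,b,c\in[0,1/2]$. The one bookkeeping point worth writing out is that in the mixed-sign cases $S$ equals $|\eta+\zeta|\mp(|\eta|-|\zeta|)$ rather than $|\eta+\zeta|-\bigl||\eta|-|\zeta|\bigr|$, but it is bounded below by the latter, which is all the conjugate trick requires.
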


\begin{proof}[Proof of Proposition \ref{est2}]
Invoking Corollary \eqref{EnergyCorollary} once again, we reduce to bounding 
$\| M_{\mu} \|_{X_\pm^{-\frac{3}{4}, -\frac{1}{4}}}$ and $\| N \|_{X_\pm^{-\frac{1}{4} - \epsilon, -\frac{1}{4}}}$.

We begin by estimating $M_{\mu}$. Recall that $Q_{\nu \rho}(\overline{\phi}, \phi)$ is given by
\[\begin{aligned}
Q_{j k}(\overline{\phi}, \phi) & = \sum_{\pm 1,\, \pm 2} \left(\partial_j \overline{\phi}_{\pm_1} \partial_k \phi_{\pm_2} - \partial_k \overline{\phi}_{\pm_1} \partial_j \phi_{\pm_2} \right), \\
Q_{0 k}(\overline{\phi}, \phi) & = \sum_{\pm 1,\, \pm 2} \left( -i\langle \nabla \rangle \left(\pm_1 \overline{\phi}_{\pm_1}\right) \partial_k \phi_{\pm_2} - \partial_k \overline{\phi}_{\pm_1} i\langle \nabla \rangle \left(\pm_2 \phi_{\pm_2}\right) \right).
\end{aligned}\]
The complex conjugate $\overline{\phi}_{\pm}$ belongs to $X_{\mp}^{s,b}$ whenever $\phi_\pm$ belongs to $X^{s,b}_\pm$ (note the sign reversal). Thus, writing $u=\overline{\phi}_{\pm_1}$ and $v = \phi_{\pm_2}$, it suffices to consider
\begin{gather}
\label{q1}
\| \partial_j (\pm_1 u) \partial_k (\pm_2 v) - \partial_k (\pm_1 u) \partial_j (\pm_2 v) \|_{X_\pm^{-\frac{3}{4}, -\frac{1}{4}}}
\\
\label{q2}
\| \partial_k (\pm_1 u) \langle \nabla \rangle v - \langle \nabla \rangle u \partial_k (\pm_2 v) \|_{X_\pm^{-\frac{3}{4}, -\frac{1}{4}}},
\end{gather}
As in \cite{ST2013}, we can estimate both of these quantities by $\| I(\tau, \xi) \|_{L^2_{\tau, \xi}}$, where
\begin{equation}\label{bigI}
I (\tau, \xi) = \int_{\mr{1+2}} \frac{\sigma (\pm_1 \eta, \pm_2 (\xi - \eta))}{\langle \xi \rangle^{s} \langle |\tau| - |\xi| \rangle^{b}} | \hat{u}(\lambda, \eta) | | \hat{v}(\tau - \lambda, \xi - \eta) |\ d\lambda d\eta,
\end{equation}
for $s = \frac{3}{4}$, $b = \frac{1}{4}$, and $\sigma$ is given by either of the expressions
\begin{equation}\begin{gathered}\label{alphas}
\sigma (\alpha, \beta) = | \alpha \times \beta | \leq | \alpha | | \beta | \theta(\alpha, \beta),
\\
\sigma (\alpha, \beta) = | \langle \alpha \rangle \beta_k - \alpha_j \langle \beta \rangle | \leq | \alpha | | \beta | \theta(\alpha, \beta) + \frac{| \alpha |}{\langle \beta \rangle} + \frac{| \beta |}{\langle \alpha \rangle}.
\end{gathered}
\end{equation}

Applying Lemma \ref{null} with $a = 1/4$, $b = c = 1/2$, we can now reduce the estimates of \eqref{q1} and \eqref{q2} to the following product estimates. We will denote by $\Lambda_\pm^b$ the Fourier multiplier with symbol $\langle -\tau \pm |\xi| \rangle^b$.
\begin{equation}\begin{aligned}\label{s1}
\|D^{\frac{3}{4}}u \cdot Dv\|_{X_\pm^{-\frac{3}{4}, 0}} & \lesssim \| u \|_{\lt \hx{1}} \| v \|_{\lt \hx{1}},
\\
\|D^{1/2}\Lambda_{\pm_1}^{1/2}u \cdot Dv\|_{X_\pm^{-\frac{3}{4}, -\frac{1}{4}}} & \lesssim \| u \|_{X_{\pm_1}^{\frac34, \frac12}} \| v \|_{\lt \hx{1}},
\\
\|D^{1/2}u \cdot D\Lambda_{\pm_2}^{1/2}v\|_{X_\pm^{-\frac{3}{4}, -\frac{1}{4}}} & \lesssim \| u \|_{\lt \hx{1}} \| v \|_{X_{\pm_2}^{\frac{3}{4}, \frac12}},
\\
\|D u \cdot \langle D \rangle^{-1}v\|_{X_\pm^{-\frac{3}{4}, -\frac{1}{4}}} & \lesssim \| u \|_{\lt \hx{1}} \| v \|_{\lt \hx{1}}.
\end{aligned}\end{equation}
Observe that the norms on the right are all among the ones in which $\phi_\pm$ is known to be bounded, in view of Proposition \ref{interpolate}.  Replacing the $X_\pm^{-\frac{3}{4}, -\frac{1}{4}}$ norms on the left by the larger $L_t^2H^{-\frac34} = X_\pm^{-\frac{3}{4}, 0}$ norms, all the estimates are seen to hold by applying Lemma \ref{prodsob} and H\"older's inequality in time. In more detail, to prove the second estimate we write
\begin{align*}
  \|D^{1/2}\Lambda_{\pm_1}^{1/2}u \cdot Dv\|_{X_\pm^{-\frac{3}{4}, -\frac{1}{4}}}
  &\le \|D^{1/2}\Lambda_{\pm_1}^{1/2}u \cdot Dv\|_{L_t^2H^{-\frac34}}
  \\
  & \lesssim \| D^{1/2}\Lambda_{\pm_1}^{1/2}u  \|_{L_t^2 H^{\frac14}} \| Dv \|_{L^\infty_t L^2_x}
  \\
  & \lesssim \| u \|_{X_{\pm_1}^{\frac34, \frac12}} \| v \|_{\lt \hx{1}},
\end{align*}
and we proceed similarly to prove the other estimates in \eqref{s1}.

Applying Corollary \ref{prodsobcor} to the term $\partial_\nu \left( A_\rho | \phi |^2 \right)$ expanded by Leibniz's rule yields
\begin{equation}\label{s2}
\left\| \partial_\nu \left( A_\rho | \phi |^2 \right) \right\|_{\x{-\frac{3}{4}}{-\frac{1}{4}}}
\le
\left\| \partial_\nu \left( A_\rho | \phi |^2 \right) \right\|_{L_t^2 H^{-\frac12-\epsilon}}
 < \infty.
\end{equation}
Finally, we make the trivial observation
\begin{equation}\label{s3}
\| A_{\mu} \|_{X_\pm^{-\frac{3}{4}, -\frac{1}{4}}} \lesssim \| A_{\mu} \|_{\x{1/2}{0}}.
\end{equation}
Based on the results from equations \eqref{s1} to \eqref{s3}, it follows that
\[
\| M_{\mu} \|_{\x{-\frac{3}{4}}{-\frac{1}{4}}} < \infty.
\]

Now we consider $N = 2i ( B_1 - B_2 - B_3 ) + A_{\mu} A^{\mu} \phi - \phi V'\left( |\phi|^2 \right) + \phi$.  Starting with $B_1$, it suffices to consider
\begin{equation}\label{b1}
\left\|  u \langle \nabla \rangle v - R_j (\pm_1 u) \partial^j (\pm_2 v) \right\|_{\x{-\frac{1}{4}-\epsilon}{-\frac{1}{4}}},
\end{equation}
where $u$ and $v$ belong to the same spaces as $A_{\mu,\pm_1}$ and $\phi_{\pm_2}$, respectively.  As was the case for $Q_{\mu \nu}$, we can estimate the above norm by $\| I(\tau, \xi) \|_{L^2_{\tau, \xi}}$, where $I(\tau, \xi)$ is given by \eqref{bigI} with $s = \frac{1}{4}+\epsilon$, $b = \frac{1}{4}$, and
\[
\sigma (\alpha, \beta) = \left| \langle \beta \rangle - \frac{\alpha \cdot \beta}{\langle \alpha \rangle}  \right| \lesssim | \beta | \theta^2(\alpha, \beta) + \langle \beta \rangle \left( \frac{1}{\langle \alpha \rangle^2} + \frac{1}{\langle \beta \rangle^2} \right).
\]
Applying the trivial estimate
\[
  \theta^2(\alpha, \beta) \lesssim \theta(\alpha, \beta),
\]
and using Lemma \ref{null} with $a = 1/4$, $b = c = 1/2$, we can reduce \eqref{b1} to the following estimates, where the norms in which $u$ and $v$ are taken on the right-hand side are among the ones in which $A_{\mu,\pm}$ and $\phi_\pm$, respectively, are bounded. 
\begin{equation}\begin{aligned}\label{t1}
\|\langle D \rangle^{-\frac{1}{4}}u\cdot Dv\|_{X_\pm^{-\frac{1}{4}-\epsilon, 0}} & \lesssim \| u \|_{\lt \hx{\frac{1}{2}}} \| v \|_{\lt \hx{1}},
\\
\|u\cdot D^{\frac{3}{4}}v\|_{X_\pm^{-\frac{1}{4}-\epsilon, 0}} & \lesssim \| u \|_{\lt \hx{\frac{1}{2}}} \| v \|_{\lt \hx{1}},
\\
\|\langle D \rangle^{-1/2}\Lambda_{\pm_1}^{1/2}u\cdot Dv\|_{X_\pm^{-\frac{1}{4}-\epsilon, -\frac{1}{4}}} & \lesssim \| u \|_{X_{\pm_1}^{\frac14-\epsilon,\frac12}} \| v \|_{\lt \hx{1}},
\\
\|\Lambda_{\pm_1}^{1/2}u\cdot D^{1/2}v\|_{X_\pm^{-\frac{1}{4}-\epsilon, -\frac{1}{4}}} & \lesssim \| u \|_{X_{\pm_1}^{\frac{1}{4}-\epsilon,\frac{1}{2}}} \| v \|_{\lt \hx{1}},
\\
\|\langle D \rangle^{-1/2}u\cdot D\Lambda_{\pm_2}^{1/2}v\|_{X_\pm^{-\frac{1}{4}-\epsilon, -\frac{1}{4}}} & \lesssim \| u \|_{\lt \hx{\frac12}} \| v \|_{X_{\pm_2}^{\frac{3}{4},\frac{1}{2}}},
\\
\| u\cdot D^{1/2}\Lambda_{\pm_2}^{1/2}v\|_{X_\pm^{-\frac{1}{4}-\epsilon, -\frac{1}{4}}} & \lesssim \| u \|_{\lt \hx{\frac12}} \| v \|_{X_{\pm_2}^{\frac{3}{4},\frac{1}{2}}},
\\
\| \langle D \rangle^{-2} u \cdot D v \|_{\x{- \frac{1}{4}-\epsilon}{- \frac{1}{4}}} & \lesssim \| u \|_{\lt \hx{\frac{1}{2}}} \| v \|_{\lt \hx{1}}, 
\\
\| u \cdot \langle D \rangle^{-1} v \|_{\x{- \frac{1}{4}-\epsilon}{- \frac{1}{4}}} & \lesssim \| u \|_{\lt \hx{\frac{1}{2}}} \| v \|_{\lt \hx{1}}.
\end{aligned}\end{equation}
Replacing the norms on the left by the larger $L_t^2H^{-\frac14-\epsilon}$ norms, the estimates are proved by applying Lemma \ref{prodsob} and H\"older's inequality in time. Thus,
\[
\| B_1 \|_{\x{- \frac{1}{4}-\epsilon}{- \frac{1}{4}}} < \infty.
\]

For the term $B_2$, we recall that 
\[
B_2 = R_1 \psi \partial_2 \phi - R_2 \psi \partial_1 \phi,
\] 
with $R_i = (1-\Delta)^{-1/2}\partial_i$ and $\psi = R_1 A_2 - R_2 A_2$.  Thus, $\psi$ enjoys the same estimates as $A$, so it is enough to consider
\[
\| R_j (\pm_1 u) \partial_k (\pm_2 v) - R_k (\pm_1 u) \partial_j (\pm_2 v) \|_{X_\pm^{-\frac{1}{4}-\epsilon, -\frac{1}{4}}},
\]
where $u$ and $v$ belong to the same spaces as $A_{\mu,\pm_1}$ and $\phi_{\pm_2}$, respectively.  Once more we can reduce to estimating $\| I(\tau, \xi) \|_{L^2_{\tau, \xi}}$, where $I(\tau, \xi)$ is as in \eqref{bigI} for $s = \frac{1}{4}+\epsilon$, $b = \frac{1}{4}$, and
\[
\sigma (\alpha, \beta) = \left| \frac{\alpha}{\langle \alpha \rangle} \times \beta \right| \leq | \beta | \theta(\alpha, \beta).
\]
From the first six estimates in \eqref{t1} we therefore get
\[
\| B_2 \|_{\x{- \frac{1}{4}-\epsilon}{- \frac{1}{4}}} < \infty.
\]

The remaining terms can be estimated much more simply.  For $B_3 = \langle \nabla \rangle^{-2} A_\mu \partial^\mu \phi$, we apply Lemma \ref{prodsob} to obtain
\begin{equation}\label{t3}
\| \langle \nabla \rangle^{-2} A_\mu \partial^\mu \phi \|_{\x{-\frac{1}{4}-\epsilon}{-\frac{1}{4}}} \lesssim \left( \sum_{\mu} \| A_\mu \|_{\lt \hx{\frac{1}{2}}}\right) \| \phi \|_{\lt \hx{1}}.
\end{equation}
Next, by Corollary \ref{prodsobcor},
\begin{equation}\label{t4}
\| A_{\mu} A^{\mu} \phi \|_{\x{-\frac{1}{4}-\epsilon}{-\frac{1}{4}}} \lesssim \left(\sum_{\mu}\| A_{\mu} \|^2_{\lt \hx{\frac{1}{2}}}\right) \| \phi \|_{\lt \hx{1}}.
\end{equation}
For the term $\phi V'\left(| \phi |^{2} \right)$ we have, recalling \eqref{2k},
\begin{equation}\label{t5}
\left\| \phi V'\left(| \phi |^{2} \right)  \right\|_{\x{-\frac{1}{4}-\epsilon}{-\frac{1}{4}}}
\lesssim
\left\| \phi V'\left(| \phi |^{2} \right)  \right\|_{L^2_t H^{-\epsilon}} 
\lesssim
 \| \phi \|_{\lt \hx{1}} V'\left( \| \phi \|_{\lt \hx{1}}^{2} \right).
\end{equation}
Finally, we have the trivial observation that
\begin{equation}\label{t6}
\| \phi \|_{\x{-\frac{1}{4}-\epsilon}{-\frac{1}{4}}} \lesssim \| \phi \|_{\x{\frac{3}{4}}{\frac{1}{2}}}.
\end{equation}
Combining the results of equations \eqref{t1} to \eqref{t6}, it follows that
\[
\| N \|_{\x{-\frac{1}{4}-\epsilon}{-\frac{1}{4}}} < \infty.
\]
\end{proof}


\section{Final Estimates for $\ppm$ and $\apm$} \label{final}

We note that by interpolating between the results in Propositions \ref{inclusion} and \ref{est2}, we have
\begin{equation}\label{newfact}
  \ppm \in X^{\frac56-\epsilon,\frac12}, \qquad \apm \in X^{\frac13,\frac12},
\end{equation}
for all $\epsilon > 0$. Repeating the arguments from the previous section, but taking into account the improvement \eqref{newfact}, we now show that our solutions belong to the appropriate spaces as in Proposition \ref{mainthm2}, which we restate and prove here.

\begin{prop}\label{est3}
If $\ppm \in C(I; H^1)$ and $\apm \in C(I; H^{1/2})$ are solutions to \eqref{CSH2}, then $\ppm \in \x{\frac{3}{4} + \epsilon}{\frac{3}{4} - \epsilon}(I)$, and $\apm \in \x{\frac{1}{4} + \epsilon}{\frac{3}{4} - \epsilon}(I)$ for $0 < \epsilon \ll 1$.
\end{prop}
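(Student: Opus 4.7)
The plan is to repeat the argument of Proposition \ref{est2} essentially verbatim, but feed in the strengthened input regularity \eqref{newfact} wherever the previous section relied only on Proposition \ref{interpolate}. More precisely, Corollary \ref{EnergyCorollary} reduces the claim to showing
\[
  \| M_\mu \|_{X_\pm^{-3/4+\epsilon,\, -1/4-\epsilon}} < \infty \quad \text{and} \quad \| N \|_{X_\pm^{-1/4+\epsilon,\, -1/4-\epsilon}} < \infty.
\]
Relative to Section \ref{improved}, the $b$-index on the output has moved from $-1/4$ to $-1/4-\epsilon$, which enlarges the target space and is a free improvement, whereas the spatial index has moved up by $\epsilon$, and this gain is precisely what \eqref{newfact} is designed to buy.

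For the non-null contributions---the cubic pieces $\partial_\nu(A^\rho|\phi|^2)$ and $A_\mu A^\mu \phi$, the low-frequency bilinear term $B_3$, the polynomial nonlinearity $\phi V'(|\phi|^2)$, and the linear remainders---I would copy the computations \eqref{s2}, \eqref{s3}, and \eqref{t3}--\eqref{t6} line by line, first replacing the $X_\pm^{\cdot,-1/4-\epsilon}$ norms on the left by the larger $L^2_t H^{\cdot}_x$ norms. The extra $\langle\xi\rangle^\epsilon$ loss on the output side is then absorbed by placing one factor in $X_\pm^{5/6-\epsilon,1/2}$ or $X_\pm^{1/3,1/2}$ from \eqref{newfact} rather than in the energy space, so that the exponent count in Lemma \ref{prodsob} or Corollary \ref{prodsobcor} acquires the needed strict slack.

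The substantive part is the null-form analysis for $Q_{\mu\nu}(\bar\phi, \phi)$ inside $M_\mu$ and for $B_1, B_2$ inside $N$. I would run the same reduction as in \eqref{bigI}--\eqref{s1} and \eqref{b1}--\eqref{t1}, arriving at a bilinear estimate weighted by $\sigma(\pm_1\eta, \pm_2(\xi-\eta))$, and then invoke Lemma \ref{null} with parameters slightly perturbed from the previous $(a,b,c)=(1/4,1/2,1/2)$ to produce the additional $\langle\xi\rangle^{-\epsilon}$ on the output. In each of the three resulting cases the weight is absorbed either by a modulation factor $\Lambda^{1/2}_{\pm_i}$, moved onto an input already controlled in $X_{\pm_i}^{\cdot,1/2}$ thanks to \eqref{newfact}, or by an extra Sobolev derivative on a factor now residing in the strictly higher-regularity space from \eqref{newfact}.

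The main obstacle is the $B_1$-type estimate, which already required the longest list \eqref{t1} of bilinear reductions in Section \ref{improved}: pushing the target up by $\epsilon$ and re-checking all eight lines against Lemma \ref{prodsob} concentrates the difficulty in the case of low-frequency $\phi$ times high-frequency $A$ treated by the $b$- or $c$-angle decomposition. Here the strict gain $1/3 > 1/4$ for $\apm$ supplied by \eqref{newfact} is essential: without it, the Sobolev count falls back on the threshold $s_0+s_1+s_2=1$ of Lemma \ref{prodsob} with no margin. Once this worst case is verified the remaining seven lines are routine, and the analogous checks for $Q_{\mu\nu}$ and $B_2$ follow by the same scheme, yielding the stated embeddings for $\ppm$ and $\apm$.
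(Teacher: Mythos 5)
Your proposal is correct and follows essentially the same route as the paper: reduce via Corollary \ref{EnergyCorollary} to bounding $M_\mu$ and $N$ in $\x{-3/4+\epsilon}{-1/4-\epsilon}$ and $\x{-1/4+\epsilon}{-1/4-\epsilon}$, rerun the null-form reduction with Lemma \ref{null} at $a=\tfrac14+\epsilon$, $b=c=\tfrac12$, and close the resulting bilinear estimates using the interpolated gains \eqref{newfact}, exactly as in \eqref{s1final} and \eqref{t1final}; you also correctly single out the $B_1$-type term where the improvement from $X_\pm^{1/4-\delta,1/2}$ to $X_\pm^{1/3,1/2}$ for $\apm$ is what rescues the Sobolev exponent count. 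The only (harmless) deviation is that you invoke \eqref{newfact} for the cubic and polynomial terms as well, whereas the paper simply notes those estimates already had ample room.
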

\begin{proof}
We once again start by applying Corollary \ref{EnergyCorollary} to reduce the problem to proving that
$\| M_{\mu} \|_{\x{-\frac{3}{4}+\epsilon}{-\frac{1}{4}-\epsilon}}$ and $\| N \|_{\x{-\frac{1}{4}+\epsilon}{-\frac{1}{4}-\epsilon}}$ are bounded. We will only estimate the bilinear terms, since the other terms are easily handled as in the previous section (there was a lot of room in all the estimates for those terms).

We start with the null form in $M_{\mu}$. Proceeding as in the proof of Proposition \ref{est2}, but taking into account that now $s=\frac34-\epsilon$ and $b=\frac14+\epsilon$ in the definition of $I(\tau,\xi)$, so that we may apply Lemma \ref{null} with $a=\frac14 + \epsilon$ and $b=c=\frac12$, then we reduce to the following set of estimates, instead of \eqref{s1}.
\begin{equation}\begin{aligned}\label{s1final}
\|D^{\frac{3}{4}-\epsilon}u \cdot Dv\|_{X_\pm^{-\frac{3}{4}+\epsilon, 0}} & \lesssim \| u \|_{\lt \hx{1}} \| v \|_{\lt \hx{1}},
\\
\|D^{1/2}\Lambda_{\pm_1}^{1/2}u \cdot Dv\|_{X_\pm^{-\frac{3}{4}+\epsilon, -\frac{1}{4}-\epsilon}} & \lesssim \| u \|_{X_{\pm_1}^{\frac56-\epsilon, \frac12}} \| v \|_{\lt \hx{1}},
\\
\|D^{1/2}u \cdot D\Lambda_{\pm_2}^{1/2}v\|_{X_\pm^{-\frac{3}{4}+\epsilon, -\frac{1}{4}-\epsilon}} & \lesssim \| u \|_{\lt \hx{1}} \| v \|_{X_{\pm_2}^{\frac56 - \epsilon, \frac12}},
\\
\|D u \cdot \langle D \rangle^{-1}v\|_{X_\pm^{-\frac{3}{4}+\epsilon, -\frac{1}{4}-\epsilon}} & \lesssim \| u \|_{\lt \hx{1}} \| v \|_{\lt \hx{1}}.
\end{aligned}\end{equation}
Replacing the norms on the left by the larger $L_t^2H^{-\frac34+\epsilon}$ norms, these estimates reduce to Lemma \ref{prodsob} and H\"older's inequality in time. Observe that the norms on the right are all among the ones in which $\phi_\pm$ is known to be bounded, in view of \eqref{newfact}.

It now follows that
\[
\| M_{\mu} \|_{\x{-\frac{3}{4}+\epsilon}{-\frac{1}{4}-\epsilon}} < \infty.
\]

Next we consider the terms $B_i$ in $N$. We only estimate $B_1$ and $B_2$, since $B_3$ is much easier (there is a lot of room in the estimate \eqref{t3}). Proceeding as in the proof of Proposition \ref{est2}, we reduce to the following estimates, which now replace \eqref{t1}.
\begin{equation}\begin{aligned}\label{t1final}
\|\langle D \rangle^{-\frac{1}{4}-\epsilon}u\cdot Dv\|_{X_\pm^{-\frac{1}{4}+\epsilon, 0}} & \lesssim \| u \|_{\lt \hx{\frac{1}{2}}} \| v \|_{\lt \hx{1}},
\\
\|u\cdot D^{\frac{3}{4}-\epsilon}v\|_{X_\pm^{-\frac{1}{4}+\epsilon, 0}} & \lesssim \| u \|_{\lt \hx{\frac{1}{2}}} \| v \|_{\lt \hx{1}},
\\
\|\langle D \rangle^{-1/2}\Lambda_{\pm_1}^{1/2}u\cdot Dv\|_{X_\pm^{-\frac{1}{4}+\epsilon, -\frac{1}{4}-\epsilon}} & \lesssim \| u \|_{X_{\pm_1}^{\frac13,\frac12}} \| v \|_{\lt \hx{1}},
\\
\|\Lambda_{\pm_1}^{1/2}u\cdot D^{1/2}v\|_{X_\pm^{-\frac{1}{4}-\epsilon, -\frac{1}{4}+\epsilon}} & \lesssim \| u \|_{X_{\pm_1}^{\frac13,\frac{1}{2}}} \| v \|_{\lt \hx{1}},
\\
\|\langle D \rangle^{-1/2}u\cdot D\Lambda_{\pm_2}^{1/2}v\|_{X_\pm^{-\frac{1}{4}+\epsilon, -\frac{1}{4}-\epsilon}} & \lesssim \| u \|_{\lt \hx{\frac12}} \| v \|_{X_{\pm_2}^{\frac56-\epsilon,\frac{1}{2}}},
\\
\| u\cdot D^{1/2}\Lambda_{\pm_2}^{1/2}v\|_{X_\pm^{-\frac{1}{4}+\epsilon, -\frac{1}{4}-\epsilon}} & \lesssim \| u \|_{\lt \hx{\frac12}} \| v \|_{X_{\pm_2}^{\frac56-\epsilon,\frac{1}{2}}},
\\
\| \langle D \rangle^{-2} u \cdot D v \|_{\x{- \frac{1}{4}+\epsilon}{- \frac{1}{4}-\epsilon}} & \lesssim \| u \|_{\lt \hx{\frac{1}{2}}} \| v \|_{\lt \hx{1}}, 
\\
\| u \cdot \langle D \rangle^{-1} v \|_{\x{- \frac{1}{4}+\epsilon}{- \frac{1}{4}-\epsilon}} & \lesssim \| u \|_{\lt \hx{\frac{1}{2}}} \| v \|_{\lt \hx{1}}.
\end{aligned}\end{equation}
As before, we dominate the norms on the left by the larger $L_t^2H^{-\frac14+\epsilon}$ norms. All the estimates are then seen to hold by applying Lemma \ref{prodsob} and H\"older's inequality in time. Moreover, the norms in which $u$ and $v$ are taken on the right-hand side are among the ones in which $A_{\mu,\pm}$ and $\phi_\pm$, respectively, are bounded, in view of \eqref{newfact}.

It follows that
\[
\| N \|_{\x{-\frac{1}{4}+\epsilon}{-\frac{1}{4}-\epsilon}} < \infty.
\]

We conclude that $\ppm \in \x{3/4+\epsilon}{3/4-\epsilon}$ and $\apm \in \x{1/4+\epsilon}{3/4-\epsilon}$, and this finishes the proof of Proposition \ref{mainthm2} and hence of our main result, Theorem \ref{mainthm}.
\end{proof}

\bibliographystyle{amsplain}
\bibliography{database}

\end{document}